\def\grad{\mathop{\rm grad}}
\def\bar{\overline}
\def\hat{\widehat}
\def\tilde{\widetilde}
\def\D{\mathcal D}
\title[Inverse obstacle problem for the wave equation]{Inverse obstacle problem for the non-stationary wave equation with an unknown background}
\author{Lauri Oksanen}
\address{University of Helsinki, P.O. Box 68 FI-00014}
\email{lauri.oksanen@helsinki.fi}
\date{\today}
\subjclass{Primary: 35R30}
\keywords{Inverse problems, wave equation, inverse obstacle problem}
\begin{document}
\maketitle
\begin{abstract}
We consider boundary measurements for the wave equation on a bounded domain $M \subset \R^2$ or on a compact Riemannian surface, and introduce a method to locate 
a discontinuity in the wave speed.
Assuming that the wave speed consist of an inclusion in a  known smooth background, the method can determine the distance from any boundary point to the inclusion. In the case of a known constant background wave speed, the method reconstructs a set contained in the convex hull of the inclusion and containing the inclusion. 
Even if the background wave speed is unknown, the method can reconstruct the distance from each boundary point to the inclusion assuming that the Riemannian metric tensor determined by the wave speed gives simple geometry in $M$.
The method is based on reconstruction of volumes of domains of influence by solving a sequence of linear equations. 
For $\tau \in C(\p M)$ the domain of influence $M(\tau)$ is 
the set of those points on the manifold from which 
the distance to some boundary point 
$x$ is less than $\tau(x)$.
\end{abstract}

\section{Introduction and the statement of the results}

Let us consider the wave equation on a compact set $M \subset \R^2$,
\begin{align}
\label{eq:wave_isotropic}
&\p_t^2 u(t,x) - \tilde c(x)^2 \Delta u(t,x) = 0, 
\quad &(t,x) \in (0, \infty) \times M,
\\&u(0,x) = 0,\ \p_t u(0,x) = 0,
\quad &x \in M, \nonumber
\\&\p_\nu u(t,x) = f(t,x), 
\quad &(t,x) \in (0, \infty) \times \p M, \nonumber
\end{align}
with a piecewise smooth wave speed 
\begin{equation*}
\label{piecewise_wave_speed}
\tilde c(x) = 
\begin{cases}
c_0(x) + c_\Sigma(x), & x \in \Sigma,
\\ c_0(x), & x \in M \setminus \Sigma,
\end{cases}
\end{equation*}
where $\Sigma \subset M^{int}$ and
$c_0$ and $c_\Sigma$ are smooth 
strictly positive functions.
Here $\p_\nu$ denotes the normal derivative 
and the boundaries $\p M$ and 
$\p \Sigma$ are smooth.
Physically $\Sigma$ corresponds to an obstacle or an inclusion in which acoustic waves propagate faster than in the background medium modelled by $c_0$.

Let $\Sigma$ and $c_\Sigma$ be unknown
and let us assume either that the wave speed $c_0$ is known or that it is unknown and gives simple\footnote{We recall the definition of simple geometry below, see Definition \ref{def:simple}.} geometry in $M$. 
We describe a method to locate the inclusion $\Sigma$
using the operator
\begin{equation}
\label{dtn}
\Lambda_{2T} : f \mapsto u^f|_{(0,2T) \times \p M},
\quad f \in L^2((0,2T) \times \p M),
\end{equation}
where $u^f(t,x) = u(t,x)$ is the solution of (\ref{eq:wave_isotropic}) and $T > 0$ is large enough.
The operator $\Lambda_{2T}$ models
boundary measurements and is called the 
Neumann-to-Dirichlet operator.
The method works equally well with 
an anisotropic background wave speed, 
see the wave equation (\ref{eq:wave}) below.

In a recent article \cite{Chen2010}, 
Chen, Haddar, Lechleiter and Monk introduce a method to reconstruct an impenetrable obstacle in the Euclidean background by using time domain acoustic measuments. 
Their method is similar to the linear sampling method originally developed for an inverse obstacle scattering problem in the frequency domain \cite{Colton1996}.
The method we propose here is based solely on control theoretic approach in the time domain.
Our method is similar to the iterative time-reversal control method by Bingham, Kurylev, Lassas and Siltanen \cite{Bingham2008, Dahl2009}
originally developed to reconstruct a smooth wave speed as a function. 
Computationally our method consists of 
solving a sequence of Tikhonov regularized linear equations on $L^2((0, 2T) \times \p M)$, 
and allows for a very efficient implementation if computation steps are intertwined with measurement steps. 

By using the boundary control method,
a smooth wave speed can be fully reconstructed from the Neumann-to-Dirichlet operator.
This is a result by Belishev \cite{Belishev1987}
for an isotropic wave speed and 
by Belishev and Kurylev \cite{Belishev1992} for an anisotropic wave speed. 
However, the boundary control method in its original form  is exponentially unstable and hard to regularize.
To our knowledge, there are two computational implementations of the method \cite{Belishev1999, Kabanikhin2005}.
In addition to these two implemetations, 
the only numerical results related to the boundary control method we are aware of are in the recent article by Pestov, Bolgova and Kazarina \cite{Pestov2010}.
We believe that there is a demand for methods that reconstruct less but are more robust. 

Our method locates the inclusion $\Sigma$ by computing the 
travel time distance from each boundary point to $\Sigma$.
Let us next describe the travel time distance function in detail. 
We denote by $(\hat M, g)$ a smooth complete and connected Riemannian surface that models the background wave speed. 
For example, in the isotropic case (\ref{eq:wave_isotropic})
we have $\hat M = \R^2$ and 
\begin{align*}
g(x) := \frac{(dx^1)^2 + (dx^2)^2}{c_0(x)^2},
\quad x \in \R^2.
\end{align*}
We let $M \subset \hat M$ and $\Sigma \subset M^{int}$
be compact sets with smooth boundary and non-empty interior.
Moreover, we assume that $M$ is connected
and that the wave speed with the inclusion is given by the non-smooth Riemannian metric tensor, 
\begin{equation}
\label{g_tilde}
\tilde g_{jk}(x) = 
\begin{cases}
c(x)^{-2} g_{jk}(x), & x \in \Sigma,
\\ g_{jk}(x), & x \in M \setminus \Sigma,
\end{cases}
\quad j,k =1, 2,
\end{equation}
where $c$ is a smooth scalar function on $\Sigma$ satisfying $c(x) > 1$ for all $x \in \Sigma$.
We denote by $d(x, y)$, $x, y \in M$, 
the Riemannian distance function of $(M, g)$ and by $\hat d$
and $\tilde d$ the Riemannian distance functions of $(\hat M, g)$ and of $(M, \tilde g)$, respectively.
Note that $d$ is not necessarily a restriction of $\hat d$.
For example, if $g$ is the Euclidean metric tensor and $M$ is non-convex then there exist $x, y \in \p M$ such that $\hat d(x, y) < d(x, y)$.

In the case of a known background manifold $(\hat M, g)$ without boundary, our method reconstructs the {\em boundary distance hull} of $\Sigma$,
\begin{align*}
H_{\p M}(\Sigma) 
:= 
M \setminus \bigcup_{y \in \p M} 
\hat B(y, \hat d(y, \Sigma)),
\end{align*}
where $\hat B(y, r) := \{ x \in \hat M;\ \hat d(x, y) < r\}$ for $y \in \hat M$ and $r > 0$.
If $(M, g)$ is simple, then the embedding $M \subset \hat M$ plays no role in our method, and we can replace $\hat d$ with $d$ in the definition of $H_{\p M}(\Sigma)$.
Note that $\Sigma \subset H_{\p M}(\Sigma)$ and, in the special case of the Euclidean background, the boundary distance hull $H_{\p M}(\Sigma)$ is a subset of the convex hull of $\Sigma$.

In the case of an unknown simple background manifold 
$(M, g)$ with known boundary $(\p M, g|_{\p M})$, our method reconstructs the distance function
\begin{align*}
r_\Sigma(y) := d(y, \Sigma), 
\quad 
y \in \p M.
\end{align*}
Thus it determines the set
\begin{align*}
\mathcal B_{\p M}(\Sigma) := 
\{ (y, \eta) \in \p TM;\ |\eta|_g < d(y, \Sigma)\},
\end{align*}
where $\p TM := 
\{(y, \eta) \in TM;\ y \in \p M,\ \eta \in T_y M \}$.
The set $\mathcal B_{\p M}(\Sigma)$ satisfies
\begin{align}
\label{hull_in_pTM}
H_{\p M}(\Sigma) 
= M \setminus \exp(\mathcal B_{\p M}(\Sigma)),
\end{align}
where $\exp$ is the exponential map of $(M, g)$.
%
As $g$ is unknown, the equation (\ref{hull_in_pTM})
does not give a method to determine $H_{\p M}(\Sigma)$.
However, if we know a priori that $g$ is close to a known metric tensor $g^0$ on $M$, 
we get a distorted image of $H_{\p M}(\Sigma)$ 
simply by visualizing the set
$\exp^0 \mathcal B_{\p M}(\Sigma)$, where
$\exp^0$ is the exponential map of $(M, g^0)$.

\begin{figure}[t]
\centering
\includegraphics[width=0.4\textwidth,clip]{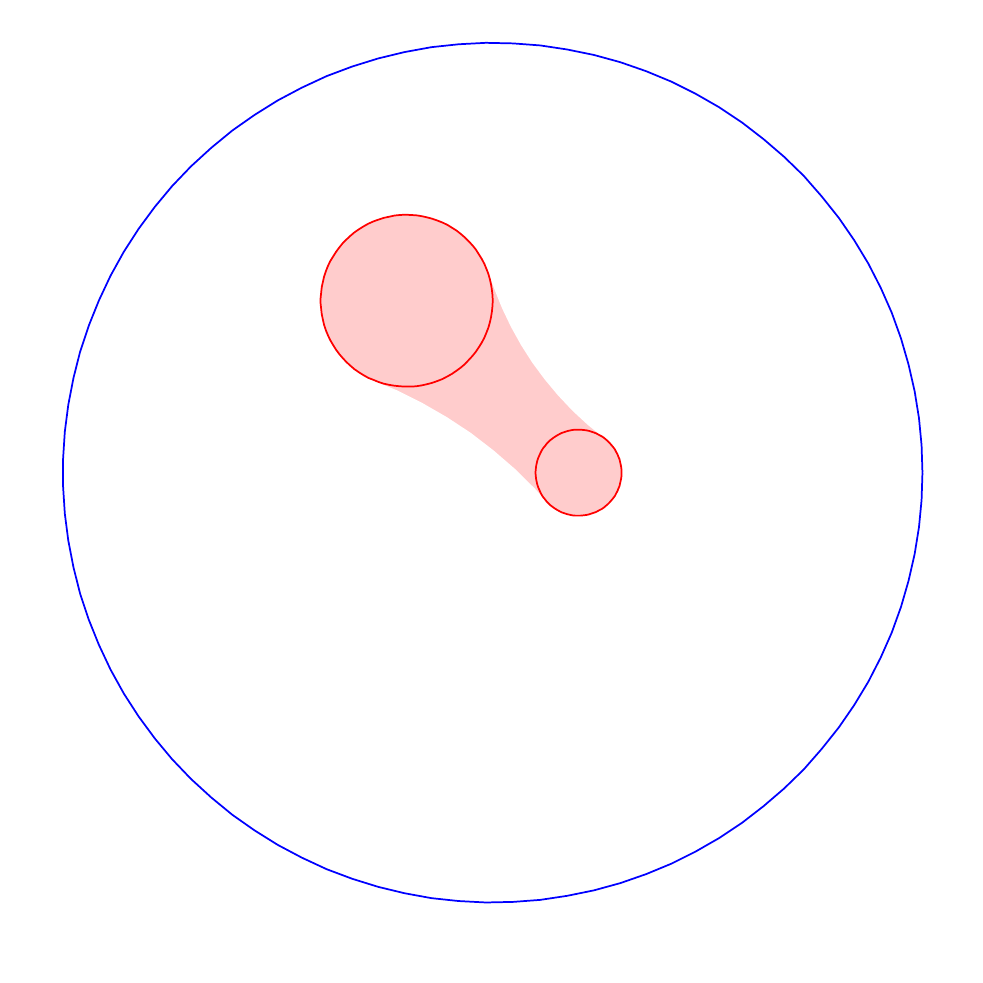}
\includegraphics[width=0.4\textwidth,clip]{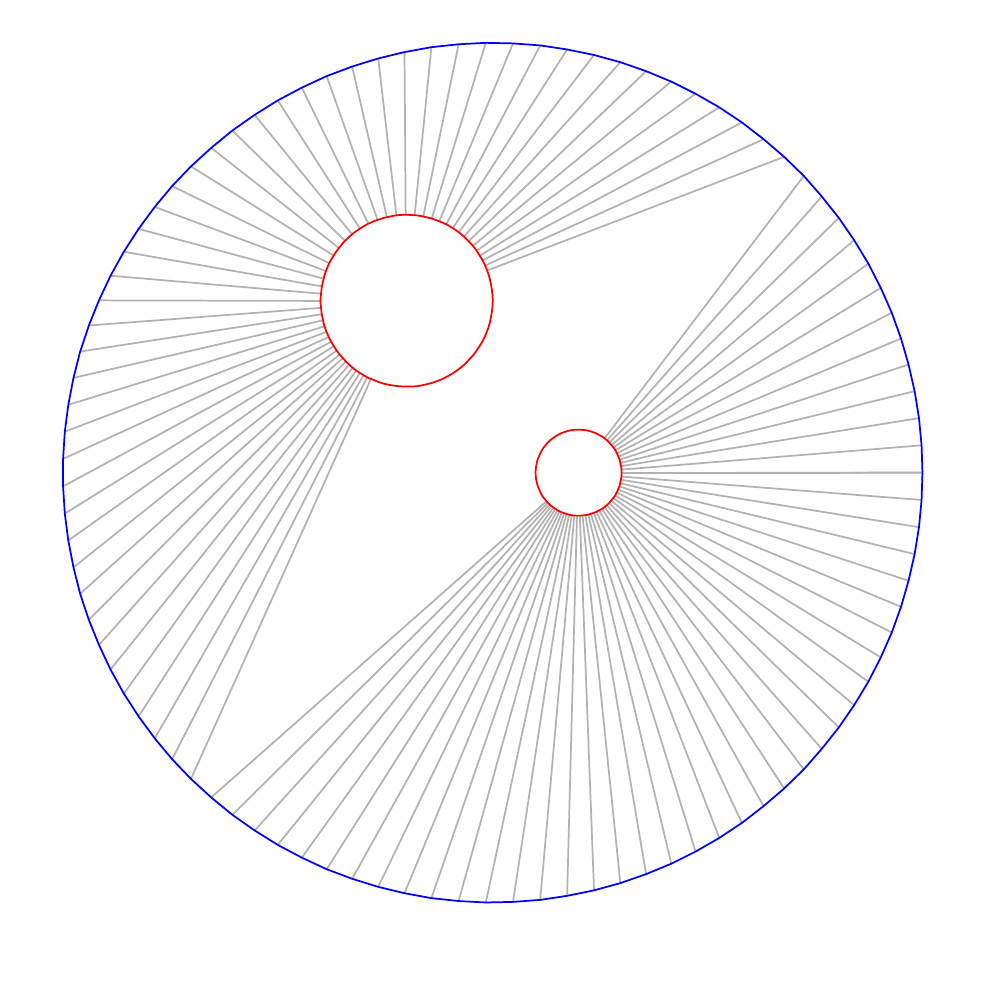}
\caption{
The domain $M$ is the large disc and 
the background wave speed is constant. The inclusion $\Sigma$ consist of the two small discs. 
{\em On left.} If the backgroud is known, the method can reconstruct the boundary distance hull $H_{\p M}(\Sigma)$ visualized as the shaded area in the picture. 
{\em On right.} If the background is unknown and simple, the method can reconstruct the vectors $\hat r_\Sigma(y) \grad \hat r_\Sigma(y)$, $y \in \p M$, 
visualized as the line segments in the picture.
}
\label{fig_hull}
\end{figure}

Moreover, if $\hat M$ is also simple and $g|_{\hat M \setminus M}$ is known, then the method can reconstruct 
the distance function,
\begin{align*}
\hat r_\Sigma(x) := \hat d(x, \Sigma), 
\quad 
x \in \hat M \setminus M.
\end{align*}
From this function it is easy to extract also directional information, since $\grad \hat r_\Sigma(x)$ exists for almost all $x \in \hat M \setminus M$ and is a unit vector pointing to such a point $z \in \p \Sigma$ that 
\begin{align*}
\hat d(x, z) = \hat d(x, \Sigma).
\end{align*}
See Figure \ref{fig_hull} for a visualization of 
$H_{\p M}(\Sigma)$ and the directions $\grad \hat r_\Sigma(y)$, $y \in \p M$.
In this paper, we focus on the recovery of the distance function $\hat r_\Sigma$ and do not study further the directional information contained in $\hat r_\Sigma$.

It is well-known that the high frequency behavior of the scattering pattern determines a convex obstacle in the Euclidean background.
For a review of this and related results we refer to the survey article \cite{Petkov2003}. 
We emphasize, however, that our method does not rely on analysis of high-frequency solutions.
Moreover, it seems possible that a logarithmic type stability result for the proposed method could be proven by using a stability estimate for the hyperbolic unique continuation principle. Such an estimate is formulated in an unfinished manuscript by Tataru \cite[Thm. 3.45]{Tataru}. 
Furthermore, our method is similar to the iterative time-reversal control method, and that method can be modified to work in the presence of measurement noise \cite{Bingham2008}. Such robustness against noise is typically not possible for a high-frequency solutions based method.

From a point of view of numerical computations, our method consists of reconstruction of volumes of domains of influence. We define for a function $\tau : \p M \to [0, \infty)$ the {\em domain of influence}
with and without the inclusion,
\begin{align*}
\tilde M(\tau) &:= \{x \in M;\ \text{there is $y \in \p M$ such that $\tilde d(x,y) \le \tau(y)$}\},
\\
M(\tau) &:= \{x \in M;\ \text{there is $y \in \p M$ such that $d(x,y) \le \tau(y)$}\}.
\end{align*}
Moreover, we define
\begin{equation*}
C_T(\p M)
:= 
\{ \tau \in C(\p M);\ 
\text{$0 \le \tau(x) \le T$ for all $x \in \p M$} \},
\end{equation*}
and denote by $\tilde m$ and $m$ the Riemannian volume measures of $(M, \tilde g)$ and $(M, g)$, respectively. Using the method introduced in \cite{Oksanen2011}
we can compute the volumes, 
\begin{align}
\label{volume_data}
\tilde m(\tilde M(\tau)), \quad \tau \in C_T(\p M),
\end{align}
from the operator $\Lambda_{2T}$ by solving a sequence of linear equations on the space $L^2((0, 2T) \times \p M)$.
We outline this method in section \ref{sec_minimization}, where
we also generalize it to cover a wave speed given by a non-smooth Riemannian metric tensor of the form (\ref{g_tilde}).

Note that, on the one hand 
$\tilde M(\tau)$ grows as 
$c(x)$ in (\ref{g_tilde}) grows,
but on the other hand the factor $c(x)^{-2}$
in the volume measure $\tilde m(x) = c(x)^{-2} m(x)$
gets smaller.
We show that the latter effect is dominating
near the boundary $\p \Sigma$.
That is, we show the following result in section 
\ref{sec_inclusion_detection}.

\begin{theorem}
\label{thm_main}
Let $\tau : \p M \to \R$ be such that 
\begin{equation}
\label{boundary_intersection}
M(\tau) \cap \Sigma \ne \emptyset,
\quad 
M(\tau) \cap \Sigma^{int} = \emptyset.
\end{equation}
Suppose that there is a 
neighborhood $U_\Sigma$ of $\Sigma$
such that 
$M(\tau) \cap U_\Sigma$ is
an embedded smooth manifold with boundary.
Then 
\begin{equation}
\label{liminf_test}
\liminf_{\epsilon \to 0}
\frac{m(M(\tau + \epsilon)) - \tilde m(\tilde M(\tau + \epsilon))}{\epsilon^{3/2}}
> 0. 
\end{equation}
\end{theorem}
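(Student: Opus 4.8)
The plan is to localize the difference near the contact set $M(\tau)\cap\p\Sigma$ and compute its leading $\epsilon^{3/2}$ asymptotics in boundary normal coordinates, showing that the dominant term comes from the thin sliver by which the domains of influence penetrate $\Sigma$ for $\epsilon>0$.

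First I would reformulate everything through the Lipschitz functions $\rho(x):=\inf_{y\in\p M}\bigl(d(x,y)-\tau(y)\bigr)$ and its analogue $\tilde\rho$ built from $\tilde d$, so that $M(\tau+\epsilon)=\{\rho\le\epsilon\}$ and $\tilde M(\tau+\epsilon)=\{\tilde\rho\le\epsilon\}$ (adding the constant $\epsilon$ to $\tau$ merely shifts these functions). Where smooth they satisfy the eikonal equations $|\grad\rho|_g=1$ and $|\grad\tilde\rho|_{\tilde g}=1$, and $\tilde d\le d$ gives $\tilde\rho\le\rho$, hence $M(\tau+\epsilon)\subseteq\tilde M(\tau+\epsilon)$. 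Since $\tilde g=g$ and $\tilde m=m$ off $\Sigma$, and since finite speed of propagation forces $\{\rho\le\epsilon\}$ and $\{\tilde\rho\le\epsilon\}$ to differ from $M(\tau)=\{\rho\le0\}$ only within $g$-distance $O(\epsilon)$, the entire difference $m(M(\tau+\epsilon))-\tilde m(\tilde M(\tau+\epsilon))$ is concentrated in an $O(\epsilon)$-neighborhood of the contact set. By hypothesis this set is nonempty, and by the smooth-manifold-with-boundary assumption it consists of finitely many points at which the wavefront $\p M(\tau)$ is tangent to $\p\Sigma$ from outside.

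Next I fix a contact point $p$ and introduce Fermi coordinates $(s,n)$ for $\p\Sigma$, with $\Sigma=\{n\le0\}$ locally and $g=dn^2+h(s,n)\,ds^2$, $h(s,0)=1$. Because $\grad\rho$ is normal to the wavefront and the wavefront is tangent to $\p\Sigma$ at $p$, the generating geodesics meet $\p\Sigma$ perpendicularly there; thus $\p_s\rho(p)=0$ and $\p_n\rho(p)=-1$, and the eikonal equation gives $\rho(s,n)=-n+b\,s^2+O(|(s,n)|^3)$, where the sign $b\ge0$ is forced by $M(\tau)\cap\Sigma^{int}=\emptyset$ (the zero set $\{n=b\,s^2+\cdots\}$ must lie in $\{n\ge0\}$). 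Hence $M(\tau+\epsilon)\cap\Sigma=\{b\,s^2-\epsilon\le n\le0\}$, of lateral width $\sim\sqrt{\epsilon/b}$ and depth $\sim\epsilon$, with $g$-area $\tfrac{4}{3\sqrt b}\,\epsilon^{3/2}+o(\epsilon^{3/2})$. The heart of the computation is the corresponding expansion for $\tilde\rho$: inside $\Sigma$ the $\tilde g$-generating geodesics enter through $\p\Sigma$ at near-perpendicular (sub-critical) incidence, refract negligibly, and traverse depth $|n|$ at $\tilde g$-cost $c(p)^{-1}|n|+O(\epsilon^2)$, so $\tilde\rho(s,n)=b\,s^2+c(p)^{-1}|n|+o(\epsilon)$ for $n\le0$. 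Thus $\tilde M(\tau+\epsilon)\cap\Sigma$ has the same width but its depth is scaled by $c(p)$, giving $g$-area $c(p)\tfrac{4}{3\sqrt b}\epsilon^{3/2}$; weighting by the density $c^{-2}$ and using $c=c(p)+O(\sqrt\epsilon)$ there yields $\tilde m(\tilde M(\tau+\epsilon)\cap\Sigma)=c(p)^{-1}\tfrac{4}{3\sqrt b}\epsilon^{3/2}+o(\epsilon^{3/2})$. The interior contribution to the difference is therefore $\bigl(1-c(p)^{-1}\bigr)\tfrac{4}{3\sqrt b}\epsilon^{3/2}>0$; the strict positivity is precisely the assertion that the volume contraction $c^{-2}$ beats the $c$-fold growth of the penetrated region, since $c\cdot c^{-2}=c^{-1}<1$. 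Summing the non-negative contributions of all contact points and dividing by $\epsilon^{3/2}$ gives \eqref{liminf_test}; at a point with $b=0$ the same scaling analysis yields a contribution of order larger than $\epsilon^{3/2}$, so the $\liminf$ is $+\infty$.

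The \emph{main obstacle} is to rule out any $\epsilon^{3/2}$-significant gain of $\tilde M(\tau+\epsilon)$ \emph{outside} $\Sigma$, which would enter the difference with the wrong sign: a priori, rays could dip through the faster region $\Sigma$ and re-emerge to illuminate the geometric shadow behind $p$ earlier than the background does. I would exclude this for small $\epsilon$ by a Snell's-law (head-wave) argument: the background wavefront sweeps $\p\Sigma$ near $p$ at apparent speed $\sim(2\sqrt{b\epsilon})^{-1}$, which exceeds the interior speed $c(p)$ once $\epsilon$ is small, so the incidence angle stays below critical and no refracted wave returns to the exterior; quantitatively $\tilde\rho=\rho$ on $\{n\ge0\}$ near $p$ up to $O(\epsilon^2)$, so the exterior contribution is $o(\epsilon^{3/2})$. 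Making this precise for the non-smooth metric $\tilde g$—analyzing the broken, refracted $\tilde g$-geodesics and the distance $\tilde d$ across the tangential interface, and thereby justifying both the interior expansion of $\tilde\rho$ and the negligibility of the shadow—is the technical crux of the proof.
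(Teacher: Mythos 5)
Your overall strategy coincides with the paper's: localize near the contact set $M(\tau)\cap\p\Sigma$, pass to semi-geodesic (Fermi) coordinates for $\p\Sigma$, observe that $M(\tau+\epsilon)$ penetrates $\Sigma$ in a sliver of width $\sim\sqrt\epsilon$ and depth $\sim\epsilon$ while $\tilde M(\tau+\epsilon)$ penetrates to depth $\sim c\epsilon$, and conclude from $c\cdot c^{-2}=c^{-1}<1$ that the interior contribution to the difference is of order $(1-c^{-1})\epsilon^{3/2}>0$. Two steps, however, are genuine gaps. First, the hypotheses do not imply that $M(\tau)\cap\Sigma$ ``consists of finitely many points'' of quadratic tangency: the contact set is merely compact and could, for instance, be an arc of $\p\Sigma$, and the tangency need not be nondegenerate. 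The paper avoids any such structural claim by covering the compact contact set with finitely many coordinate patches and a partition of unity, and by exploiting only the one-sided bound $\phi(x^2)=d((0,x^2),M(\tau))\le a(x^2)^2$ (which follows from $\phi\ge 0$, $\phi(0)=0$, $\phi'(0)=0$ and smoothness). This yields a lower bound on the sliver's volume with no information about the shape of the contact set; likewise only an \emph{upper} bound on the penetration depth of $\tilde M(\tau+\epsilon)$ into $\Sigma$ is needed, so your exact two-sided expansion of $\tilde\rho$ inside $\Sigma$ is both harder to justify and unnecessary.

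Second --- and this is the point you yourself flag as the crux --- ruling out an $\epsilon^{3/2}$-order gain of $\tilde M(\tau+\epsilon)$ outside $\Sigma$ cannot rest on a Snell's-law/head-wave heuristic about apparent speeds and critical incidence; for the non-smooth metric $\tilde g$ this is not a proof. The paper's replacement is purely metric: if $x\in\tilde M(\tau+\epsilon)$ lies in $\Sigma$ or outside $M(\tau+\epsilon)$, then any $\tilde g$-path from $\p M$ to $x$ of length at most $\tau(y)+\epsilon$ must cross $\p\Sigma$, and a length-budget computation along that path produces $z\in\p\Sigma$ with $\tilde d(x,z)+d(z,M(\tau))\le\epsilon$ (Lemma \ref{lem_interesting_x}). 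Combined with the monotonicity of $\rho(x)=d(x,M(\tau))$ in the normal direction (\ref{distance_grows_with_x_1}), this shows that points gained outside $\Sigma$ satisfy $\phi(x^2)>\epsilon$, and the Taylor estimate of Lemma \ref{lem_phi_diff} then pins them to $|x^1|=\O(\epsilon^2)$, so their total measure is $\O(\epsilon^2)$ and harmless. You would need to supply an argument of this kind (or rigorously analyze broken $\tilde g$-shortest paths across the tangential interface) to close the proof.
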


If the background manifold $(M, g)$ is known, $y_0 \in \p M$ and the sets, 
\begin{equation*}
B(y_0, r) := \{x \in M^{int};\ d(x, y_0) < r \}, 
\quad r > 0,
\end{equation*}
have smooth boundaries, then 
Theorem \ref{thm_main} gives a test to determine the smallest $r > 0$ such that 
$B(y_0, r) \cap \Sigma \ne \emptyset$, that is,
the distance $d(y_0, \Sigma)$.
Indeed, we may choose $\tau_r \in C(\p M)$
such that $\tau_r(y_0) = r$ and
$\tau_r = 0$ outside a small neighborhood of $y_0$
in $\p M$.
If $\tau_r(y)$ decreases to zero fast enough as $d(y, y_0)$
grows, then $M(\tau_r) = B(y_0, r)$.
Moreover, $M(\tau_r) = \tilde M(\tau_r)$
whenever $M(\tau_r) \cap \Sigma = \emptyset$,
whence by Theorem \ref{thm_main},
\begin{equation*}
d(y_0, \Sigma) = \max\{r > 0;\ m(M(\tau_r)) = \tilde m(\tilde M(\tau_r)) \}.
\end{equation*}

In section \ref{sec_distance} we present a refinement of this test that works for compact domains $M \subset \hat M$ with smooth boundary, where $\hat M$ is a complete smooth manifold without boundary. In particular, $(M, g)$ may have conjugate points and $B(y_0, r)$ may have non-smooth boundary for some $y_0 \in \p M$ and $r > 0$.
Moreover, we show that in the case of an unknown and simple background manifold $(M, g)$, the distance $d(y_0, \Sigma)$ can be reconstructed by using a test related to the second derivative of the function $r \mapsto \tilde m(B(y_0, r))$.

Let us next summarize our results as a theorem.
We remind the reader that $(\hat M, g)$ is 
a smooth complete and connected Riemannian surface,
$M \subset \hat M$ and $\Sigma \subset M^{int}$
are compact sets with smooth boundaries and non-empty interiors,
$M$ is connected and $\tilde g$ is defined 
by (\ref{g_tilde}).
We consider the wave equation 
\begin{align}\label{eq:wave}
&\p_t^2 u(t,x) - \Delta_{\tilde g} u(t, x) = 0, \quad (t,x) \in (0,\infty) \times M,
\\\nonumber& u|_{t=0} = 0, \quad \p_t u|_{t=0}=0,  
\\\nonumber& \p_\nu u(t,x) = f(t,x), \quad (t,x) \in (0,\infty) \times \p M,
\end{align}
where $\Delta_{\tilde g}$ is the Laplace-Beltrami operator on $(M, \tilde g)$ and $\p_\nu$ is the normal derivative on $(\p M, g)$. 
We denote by $|\tilde g(x)|$ the determinant of 
$\tilde g(x)$. If 
$(\tilde g^{jk}(x))_{j,k=1}^2$ denotes the inverse of $\tilde g(x) = (\tilde g_{jk}(x))_{j,k=1}^2$ in some coordinates, then
\begin{align*}
\Delta_{\tilde g} u
&:= \sum_{j,k=1}^2 |\tilde g(x)|^{-\frac 12}\frac {\p}{\p x^j} 
\ll( |\tilde g(x)|^{\frac 12}\tilde g^{jk}(x)\frac {\p u}{\p x^k} \rr),
\\\p_\nu u 
&:= \sum_{j,k=1}^2 g^{jk}(x) \nu_k(x) \frac{\p u}{\p x^j},
\end{align*}
where $\nu = (\nu_1, \nu_2)$ is the exterior co-normal vector of $\p M$
normalized with respect to $g$, that is, $\sum_{j,k=1}^2 g^{jk}\nu_j\nu_k=1$.
We define the operator $\Lambda_{2T}$
by (\ref{dtn}) where $u^f$ is the solution of
(\ref{eq:wave}).

\begin{definition}
\label{def:simple}
A compact Riemannian manifold $(M, g)$ with boundary is {\em simple}
if it is simply connected, any geodesic has no conjugate points and
$\p M$ is strictly convex with respect to the metric $g$.
\end{definition}

\begin{theorem}
\label{thm_summary}
If the boundary $(\p M, g|_{\p M})$ 
and the operator $\Lambda_{2T}$ 
are known, then the volume data (\ref{volume_data})
can be computed.
Moreover, if $T > \norm{r_\Sigma}_{L^\infty(\p M)}$,
then the following two implications hold.
\begin{itemize}
\item[(i)] If $(M, g)$ is simple, 
then the set $\mathcal B_{\p M}(\Sigma) \subset \p TM$
can be reconstructed from the volume data (\ref{volume_data}). Furthermore, the boundary distance hull satisfies
\begin{align*}
H_{\p M}(\Sigma) 
= 
M \setminus \exp (\mathcal B_{\p M}(\Sigma)).
\end{align*}
\item[(ii)] If $(\hat M, g)$ has no boundary and $g$ is known,
then the boundary distance hull 
$H_{\p M}(\Sigma)$
can be reconstructed from the volume data (\ref{volume_data}).
\end{itemize}
\end{theorem}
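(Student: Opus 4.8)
The plan is to prove the three assertions separately — that the volume data (\ref{volume_data}) is computable, that the distance $r_\Sigma$ is recoverable from it in each of the two settings, and that $\mathcal B_{\p M}(\Sigma)$ and $H_{\p M}(\Sigma)$ can then be assembled — using Theorem \ref{thm_main} as the single quantitative input and organizing the geometry and control theory around it. For the computability of the data I would invoke the boundary control method: the operator $\Lambda_{2T}$ together with $(\p M, g|_{\p M})$ determines, through an energy identity of Blagoveshchenskii type, the inner products $(u^f(T,\cdot),u^h(T,\cdot))_{L^2(M,\tilde m)}$ for all boundary controls $f,h$, with no knowledge of $\tilde g$ in the interior. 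By the approximate controllability following from Tataru's unique continuation principle, waves controlled from a part of the boundary fill, densely in $L^2(M,\tilde m)$, the functions supported in the corresponding domain of influence, and a Tikhonov-regularized minimization of a quadratic functional built from those inner products returns $\tilde m(\tilde M(\tau))$, exactly as in \cite{Oksanen2011}. The only genuinely new point, deferred to section \ref{sec_minimization}, is to check that finite speed of propagation and unique continuation survive the jump of $\tilde g$ across $\p \Sigma$ in (\ref{g_tilde}); granting this, the data (\ref{volume_data}) follows.

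The heart of the argument is the distance reconstruction. Fixing $y_0 \in \p M$, I would use the family $\tau_r \in C_T(\p M)$ concentrated at $y_0$ with $\tau_r(y_0)=r$ and $M(\tau_r)=B(y_0,r)$ described after Theorem \ref{thm_main}; the hypothesis $T > \norm{r_\Sigma}_{L^\infty(\p M)}$ ensures that every relevant domain of influence is seen by the data. For $r$ below the first contact one has $\tilde M(\tau_r)=M(\tau_r)$ and $\tilde m=m$ there, so the data function $r \mapsto \tilde m(\tilde M(\tau_r))$ agrees with the background area $r \mapsto m(B(y_0,r))$. In case (ii), with $g$ known, I would compute $m(M(\tau_r))$ outright and read off $d(y_0,\Sigma)=\max\{r : m(M(\tau_r))=\tilde m(\tilde M(\tau_r))\}$, the equality failing beyond this radius by the strict inequality (\ref{liminf_test}). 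In case (i), only the data function is available, so I would exploit that on a simple manifold the sphere $\{d(\cdot,y_0)=r\}$ is smooth and meets only the convex boundary $\p M$, making $r \mapsto m(B(y_0,r))$ real-analytic up to the first contact with $\Sigma$; Theorem \ref{thm_main} then forces an $\epsilon^{3/2}$ defect in $m-\tilde m$ immediately past $r=r_\Sigma(y_0)$, which appears as a one-sided singularity of the second derivative of the known data function and pinpoints $r_\Sigma(y_0)$.

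Finally I would assemble the stated objects. With $r_\Sigma(y)=d(y,\Sigma)$ recovered for all $y\in\p M$, and the full metric at the boundary fixed by $g|_{\p M}$ in boundary normal coordinates, the norm $|\eta|_g$ on each $T_yM$ is determined, so $\mathcal B_{\p M}(\Sigma)=\{(y,\eta)\in\p TM : |\eta|_g<r_\Sigma(y)\}$ is reconstructed, proving (i). The identity $H_{\p M}(\Sigma)=M\setminus\exp(\mathcal B_{\p M}(\Sigma))$ is then purely geometric: in simple geometry $\exp_y$ is a diffeomorphism carrying $\{|\eta|_g<r\}$ onto $B(y,r)$, so $\exp(\mathcal B_{\p M}(\Sigma))=\bigcup_{y\in\p M}B(y,d(y,\Sigma))$, which is the complement of $H_{\p M}(\Sigma)$ after replacing $\hat d$ by $d$ as permitted in the simple case. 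For (ii) the same reconstruction yields $\hat d(y,\Sigma)$, and since $\hat M$ has no boundary and $g$ is known I can form $H_{\p M}(\Sigma)=M\setminus\bigcup_{y\in\p M}\hat B(y,\hat d(y,\Sigma))$ directly from the known background.

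The main obstacle I anticipate is the simple, unknown-background case: extracting the weak $\epsilon^{3/2}$ singularity from a second derivative of a measured volume while separating it from background features such as the contact of the distance spheres with $\p M$, and, throughout, verifying that the family $\tau_r$ can be arranged so that the first-touch condition (\ref{boundary_intersection}) and the smoothness of $M(\tau)\cap U_\Sigma$ required by Theorem \ref{thm_main} actually hold, including radii for which $B(y_0,r)$ has non-smooth boundary. These regularity and genericity issues are precisely what section \ref{sec_distance} must dispose of.
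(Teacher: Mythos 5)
Your overall strategy matches the paper's: compute the volume data via the Blagovestchenskii identity and Tikhonov-regularized control (Section \ref{sec_minimization}), recover $d(y,\Sigma)$ by a first-contact test on the volumes in the known-background case and by a second-derivative singularity test in the simple unknown-background case (Theorems \ref{thm_known_bg} and \ref{thm_unknown_bg}), and then assemble $\mathcal B_{\p M}(\Sigma)$ and $H_{\p M}(\Sigma)$. However, there is a genuine gap in your treatment of case (i). You assert that on a simple manifold $r \mapsto m(B(y_0,r))$ is real-analytic (or at least $C^2$) up to the first contact with $\Sigma$, so that the $\epsilon^{3/2}$ defect from Theorem \ref{thm_main} is the \emph{only} possible source of a second-derivative singularity. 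This is false: even for a Euclidean disc with $y_0 \in \p M$, the area of $B(y_0,r)\cap M$ fails to be twice differentiable at radii where the geodesic sphere meets $\p M$ non-transversally (e.g.\ tangentially), and simplicity only guarantees transversality for almost every $r$, not every $r$. A single-parameter family $\tau_r$ therefore produces false positives: the test ``$\p_r^2$ of the data fails to exist'' can trigger at background radii strictly below $d(y_0,\Sigma)$. The paper's resolution is precisely the two-parameter family $\tau_{r,h}$ and Lemma \ref{lem_smoothness_on_simple}, where the transversality theorem (Sard over the collar width $h$) shows that $r \mapsto \vol(\tau_{r,h})$ is smooth near any fixed $r_0$ for \emph{almost all} $h$; property (C) is then quantified over a.e.\ $h$, which restores the equivalence between ``second derivative fails for a.e.\ $h$'' and ``$r_0 = d(x,\Sigma)$''. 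You name this obstacle but do not supply the mechanism that removes it, and without the extra parameter $h$ there is no obvious substitute.

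A secondary, smaller issue is your choice $M(\tau_r)=B(y_0,r)$ with $\tau_r$ supported near $y_0$ in case (ii). The paper instead takes $\tau_r(y)=r-\hat d(x,y)$, proves $M(\tau_r)=\hat B(x,r)\cap M$ (Lemma \ref{smoothness_of_intersection}), and obtains the smoothness of $M(\tau_r)$ near $\Sigma$ at first contact \emph{automatically}, because a first-contact point is a nearest point to $x$ on $\p\Sigma$ and hence $z\mapsto\hat d(x,z)$ is smooth with nonvanishing gradient there; this is what legitimizes the application of Theorem \ref{thm_main} even when $B(y_0,r)$ has a non-smooth boundary elsewhere (conjugate points, cut locus). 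Your version leaves both the decay condition ensuring $M(\tau_r)=B(y_0,r)$ and the hypothesis ``$M(\tau)\cap U_\Sigma$ is an embedded smooth manifold with boundary'' unverified, and the latter is an actual hypothesis of the theorem you are invoking, not a genericity nicety.
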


In the context of inverse acoustic scattering problems, there is an extensive literature about inclusion detection methods. Some of these methods have also been applied to 
solve inverse obstacle problems for the time domain wave equation.
The methods in \cite{Lines2005} and in \cite{Luke2006} take Fourier transforms of time domain measurement data and solve the inverse obstacle problem by using inclusion detection methods developed for scattering problems. Moreover, the method in \cite{Burkard2009} process the measurement data partly in the frequency domain.

The only inclusion detection method processing the measurement data entirely in the time domain that we are aware of is the already mentioned sampling method in \cite{Chen2010}.
The analysis in \cite{Chen2010} depends on frequency domain techniques, and the finite speed of propagation for the wave equation seems to be an obstruction in carrying out the analysis.
On the contrary, our method is based on the finite speed of propagation and the complementary unique continuation principle by Tataru \cite{Tataru1995}.

Well-known inclusion detection methods in the frequency domain include the already mentioned 
linear sampling method by Colton and Kirsch and the 
enclosure method by Ikehata \cite{Ikehata2000}.
A modification of the linear sampling method 
by Kirsch is called the factorization method \cite{Kirsch1998}, and it can be interpreted by using localized potentials \cite{Gebauer2008}.
The enclosure method is the first inclusion detection method based on the complex geometrical optics solutions developed by Sylvester and Uhlmann in their fundamental paper \cite{Sylvester1987}. For later complex geometrical optics solutions based methods see \cite{Ikehata2004a, Ide2007, Uhlmann2008}.

The factorization method has been applied also to electrostatic measurements \cite{Hahner1999, Bruhl2001},
and the enclosure method was developed for both acoustic scattering and electrostatic measurements from the very beginning.
For other methods to solve inverse obstacle problems 
related to scattering and electrostatic measurements see the probe \cite{Ikehata1998}
and singular sources \cite{Potthast2001} methods,
the no response test \cite{Luke2003},
the scattering support techniques \cite{Kusiak2003,  Potthast2003, Hanke2008} and the review article \cite{Potthast2006}.
Furthermore, we refer to the review article \cite{Isakov2009} for uniqueness and stability results related to inverse obstacle problems.

The uniqueness results for the inverse problem for the wave equation mentioned above assume smooth wave speed \cite{Belishev1987, Belishev1992}. 
However, in a recent article \cite{Kirpichnikova2007}, Kirpichnikova and Kurylev consider piecewise smooth wave speeds on Riemannian polyhedra. 
Moreover, the stability results \cite{Anderson2004, Bellassoued2010, Stefanov2005} establish uniqueness for wave speeds with a limited number of derivatives,
and there is an extensive literature about uniqueness results for the related Calder\'on's inverse problem under low regularity assumptions including
\cite{Astala2006, Brown2003, Brown1997, Greenleaf2003, Kohn1985, Paivarinta2003}.
For a review of the latter results we refer to \cite{Uhlmann2009}.

\section{Inclusion detection from the volume data}
\label{sec_inclusion_detection}

In this section we prove Theorem \ref{thm_main}.
We assume throughout the section 
that $\tau : \p M \to \R$ satisfies (\ref{boundary_intersection}).

%

\begin{lemma}
\label{lem_interesting_x}
Let $\epsilon > 0$. 
If $x \in \tilde M(\tau + \epsilon)$ and
\begin{equation}
\label{interesting_x}
x \in \Sigma 
\quad \text{or} \quad 
x \notin M(\tau + \epsilon),
\end{equation}
then there is 
$z \in \p \Sigma$ 
such that
\begin{equation}
\label{intersection_point}
\tilde d(x, z) + d(z, M(\tau)) \le \epsilon.
\end{equation}
\end{lemma}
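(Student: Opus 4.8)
The plan is to produce the point $z$ as the first intersection with $\Sigma$ of a minimizing $\tilde g$-path running from a suitable boundary point to $x$. Concretely, since $x\in\tilde M(\tau+\epsilon)$ I would choose $y\in\p M$ with $L:=\tilde d(x,y)\le\tau(y)+\epsilon$ and a $\tilde g$-length minimizing curve $\gamma:[0,L]\to M$, parametrized by $\tilde g$-arc length, from $\gamma(0)=y$ to $\gamma(L)=x$ (such a minimizer exists because $\tilde g$, although discontinuous across $\p\Sigma$, still induces a length metric on the compact manifold $M$). The first observation is that $\gamma$ must meet $\Sigma$: if $x\in\Sigma$ this is immediate, while if $x\notin M(\tau+\epsilon)$ and $\gamma$ stayed in the open set $M\setminus\Sigma$, then $\tilde g=g$ along $\gamma$ and hence $d(x,y)\le L\le\tau(y)+\epsilon$, forcing $x\in M(\tau+\epsilon)$, a contradiction. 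This is exactly where the two alternatives in (\ref{interesting_x}) get used. I then set $s_0:=\min\{t\in[0,L]:\gamma(t)\in\Sigma\}$ and $z:=\gamma(s_0)$; since $\gamma$ reaches $z$ from outside $\Sigma$, we get $z\in\p\Sigma$.

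Two elementary length comparisons supply the building blocks. On $[0,s_0]$ the curve lies in $M\setminus\Sigma$ apart from its endpoint $z$, and $g$ and $\tilde g$ agree off $\Sigma$; as $\p\Sigma$ is negligible for the length integral, the $\tilde g$- and $g$-lengths of $\gamma|_{[0,s_0]}$ coincide, so $d(y,z)\le s_0$. On the other hand $\gamma|_{[s_0,L]}$ is itself minimizing, so $\tilde d(x,z)=L-s_0$. I would also record the formula $d(z,M(\tau))=\big(\inf_{y'\in\p M}(d(z,y')-\tau(y'))\big)^+$ (proved by taking, for each competitor $y'$ with $d(z,y')\ge\tau(y')$, the point at $g$-distance $\tau(y')$ from $y'$ on a minimizing geodesic toward $z$), which specializes to the bound $d(z,M(\tau))\le (d(y,z)-\tau(y))^+\le (s_0-\tau(y))^+$.

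The crux, and the step I expect to be the main obstacle, is to show $s_0\ge\tau(y)$, for only then does $(s_0-\tau(y))^+=s_0-\tau(y)$ combine with $\tilde d(x,z)=L-s_0$ to give
\[
\tilde d(x,z)+d(z,M(\tau))\le (L-s_0)+(s_0-\tau(y))=L-\tau(y)\le\epsilon .
\]
Here the hypothesis $M(\tau)\cap\Sigma^{int}=\emptyset$ from (\ref{boundary_intersection}) is essential. Suppose $s_0<\tau(y)$. Then $d(y,z)\le s_0<\tau(y)$, so $z$ lies in the open ball $B(y,\tau(y))\subset M(\tau)$, which is then a neighborhood of $z$; since $z\in\p\Sigma$, this neighborhood contains a point $w\in\Sigma^{int}$, and $d(y,w)<\tau(y)$ puts $w\in M(\tau)\cap\Sigma^{int}$, contradicting (\ref{boundary_intersection}). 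Hence $s_0\ge\tau(y)$, and the displayed estimate yields (\ref{intersection_point}). The rest is technical rather than conceptual: justifying the existence of $\tilde g$-minimizers, the negligibility of $\p\Sigma$ for lengths, and the distance-to-$M(\tau)$ formula. The geometric content is entirely in the budget bookkeeping $s_0\ge\tau(y)$, which says that a fast $\tilde g$-geodesic cannot reach $\Sigma$ before it has spent the full travel time $\tau(y)$, precisely because $M(\tau)$ does not penetrate $\Sigma^{int}$.
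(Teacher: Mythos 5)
Your proposal is correct and follows essentially the same route as the paper: take a short $\tilde g$-path from a boundary point $y$ to $x$, let $z$ be its first hit of $\p\Sigma$, show via $M(\tau)\cap\Sigma^{int}=\emptyset$ that the $g$-length spent before reaching $z$ is at least $\tau(y)$, and conclude by budget bookkeeping. The only (harmless) variations are that you insist on a genuine minimizer where the paper uses any path of length $\le\tau(y)+\epsilon$, and you bound $d(z,M(\tau))$ by walking $\tau(y)$ along a $g$-geodesic from $y$ toward $z$, whereas the paper bounds it via the last point of $\gamma$ lying in $M(\tau)$ before $z$.
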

\begin{proof}
Let $l(\gamma)$ denote the length of a path $\gamma$
with respect to metric $d$ 
and $\tilde l(\gamma)$ with respect to metric $\tilde d$.
There is $y \in \p M$ and a path 
$\gamma : [0, \ell] \to M$
from $y$ to $x$ such that $\tilde l(\gamma) \le \tau(y) + \epsilon$.

We claim that both of the conditions (\ref{interesting_x})
imply that $\gamma$ intersects $\Sigma$.
First, if $x \in \Sigma$ then
this is immediate.
Second, if $x \notin M(\tau + \epsilon)$
then we can not have $\gamma([0, \ell]) \subset M \setminus \Sigma$, since this implies that 
$l(\gamma) = \tilde l(\gamma) \le \tau(y) + \epsilon$,
whence a contradiction $x \in  M(\tau + \epsilon)$.
Thus $\gamma$ intersects $\Sigma$.
Let $t_1 > 0$ be the smallest $t \in [0,\ell]$
such that $\gamma(t) \in \p \Sigma$.
Moreover, let $t_0 \ge 0$ be the 
largest $t \in [0, t_1]$ such that $\gamma(t) 
\in M(\tau)$. 

We claim that $l(\gamma|_{[0,t_0]}) \ge \tau(y)$.
First, if $t_0 < t_1$ and $l(\gamma|_{[0,t_0]}) < \tau(y)$,
then for a small $t > 0$ we have that
$t_0 + t < t_1$ and $l(\gamma|_{[0,t_0 + t]}) < \tau(y)$.
Thus 
\begin{equation*}
t_0 + t < t_1 
\quad \text{and} \quad
\gamma(t_0 + t) \in M(\tau)^{int},
\end{equation*}
which is a contradiction with the maximality of $t_0$.
Second, if $t_0 = t_1$ and $l(\gamma|_{[0,t_0]}) < \tau(y)$,
then $\gamma(t_1) \in M(\tau)^{int} \cap \Sigma$,
which yields a contradiction with (\ref{boundary_intersection}).
Thus $l(\gamma|_{[0,t_0]}) \ge \tau(y)$.

We define $z := \gamma(t_1)$.
Then 
$l(\gamma|_{[t_0, t_1]}) \ge d(M(\tau),z)$.
Hence
\begin{align*}
\tilde d(x, z) + d(z, M(\tau))
&\le 
\tilde l(\gamma|_{[t_1, \ell]}) 
+ l(\gamma|_{[t_0, t_1]})
\\&= 
\tilde l(\gamma|_{[t_0, \ell]}) 
= \tilde l(\gamma) - l(\gamma|_{[0, t_0]})
\\&\le \tau(y) + \epsilon - \tau(y)
= \epsilon.
\end{align*}

%
%
\end{proof}

\begin{lemma}
\label{lem_the_difference_set}
Let $W \subset M$ be a neighborhood of 
$M(\tau) \cap \Sigma$.
Then there is $\epsilon_W > 0$ such that 
for all $\epsilon \in (0, \epsilon_W)$
\begin{align*}
\{ x \in \tilde M(\tau + \epsilon);\ 
\text{x satisfies (\ref{interesting_x})}\ \}
\subset 
W.
\end{align*}
\end{lemma}
\begin{proof}
We may choose a neighborhood $V$ of 
$M(\tau) \cap \Sigma$ such that 
$\bar V \subset W$.
Moreover, we may choose such $\epsilon_W > 0$ that
\begin{align}
\label{epsilon_W_1}
\epsilon_W &< \tilde d(\bar V, M \setminus W),
\\\label{epsilon_W_2}
\epsilon_W &< d(M(\Gamma, \tau), \Sigma \setminus V).
\end{align}
Let $\epsilon \in (0, \epsilon_W)$ and
let $x \in \tilde M(\tau + \epsilon)$
satisfy (\ref{interesting_x}).
By Lemma \ref{lem_interesting_x}
there is $z \in \Sigma$
such that $\tilde d(x, z) \le \epsilon_W$
and $d(z, M(\tau)) \le \epsilon_W$.
Then $z \in V$, since otherwise (\ref{epsilon_W_2})
would be violated.
Thus $x \in W$, since otherwise (\ref{epsilon_W_1})
would be violated.
\end{proof}

Let $\psi \in C^\infty(M)$ be a non-negative
function and define the measures
\begin{equation*}
m_\psi(E) := \int_{E} \psi dm,
\quad
\tilde m_\psi(E) := \int_{E} \psi d\tilde m.
\end{equation*}
The following theorem can be considered as a local version of Theorem \ref{thm_main}.
\begin{theorem}
\label{thm_main_local}
Let $x_0 \in M(\tau) \cap \Sigma$,
and suppose that there is a 
neighborhood $U(x_0)$ of $x_0$
such that 
$M(\tau) \cap U(x_0)$ is
an embedded smooth manifold with boundary.
Then there is a neighborhood $V(x_0) \subset M$
of $x_0$ and $\epsilon(x_0) > 0$ such that
for all $\epsilon \in (0, \epsilon(x_0))$,
\begin{align*}
\label{ineq_for_integrals_over_V_delta}
&m_\psi(V(x_0) \cap M(\tau + \epsilon))
 - \tilde m_\psi(V(x_0) \cap \tilde M(\tau + \epsilon))
\\&\quad\ge 
m_1(\epsilon, \psi, x_0)
- \O(\epsilon^2),
\end{align*}
where $m_1(\epsilon, \psi, x_0) \ge 0$.
Moreover, if $\psi(x_0) > 0$ then
\begin{equation*}
\liminf_{\epsilon \to 0} \epsilon^{-3/2} m_1(\epsilon, \psi, x_0) > 0.
\end{equation*}
\end{theorem}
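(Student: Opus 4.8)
The plan is to reduce the volume comparison to a boundary-layer computation in normal coordinates near $\partial\Sigma$. The heuristic is this: near a point $x_0 \in M(\tau) \cap \Sigma \subset \partial\Sigma$ (recall that (\ref{boundary_intersection}) forces $x_0$ to lie on $\partial\Sigma$, since $M(\tau)$ meets $\Sigma$ but not $\Sigma^{int}$), the two domains $M(\tau+\epsilon)$ and $\tilde M(\tau+\epsilon)$ differ only through how the wave travels across the interface $\partial\Sigma$, where the speed jumps because $c > 1$ on $\Sigma$. Inside $\Sigma$ the faster speed makes $\tilde M(\tau+\epsilon)$ reach slightly \emph{farther} than $M(\tau+\epsilon)$, but the volume measure $\tilde m = c^{-2} m$ is correspondingly \emph{smaller}. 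The claim is that the volume loss from the $c^{-2}$ factor beats the volume gain from the extended reach, and the net difference is positive of order $\epsilon^{3/2}$.

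\medskip

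\noindent First I would set up Fermi/boundary-normal coordinates $(s, n)$ for $\partial\Sigma$ with respect to the metric $g$, where $s$ parametrizes $\partial\Sigma \cap U(x_0)$ and $n$ is signed $g$-distance to $\partial\Sigma$ (positive inside $\Sigma$). By the smoothness hypothesis on $M(\tau) \cap U(x_0)$, the boundary $\partial M(\tau)$ is a smooth curve meeting $\partial\Sigma$; I would use Lemma \ref{lem_interesting_x} to control exactly which points enter the symmetric difference. That lemma says any $x$ contributing to the difference has a point $z \in \partial\Sigma$ with $\tilde d(x,z) + d(z, M(\tau)) \le \epsilon$, which pins the relevant region to an $O(\epsilon)$-neighborhood of $\partial M(\tau) \cap \partial\Sigma$. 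Then I would compute, layer by layer in $s$, the two contributions: the $g$-volume of the sliver of $M(\tau+\epsilon)$ just inside $\Sigma$ versus the $\tilde g$-volume of the corresponding $\tilde M(\tau+\epsilon)$ sliver. The key geometric input is that inside $\Sigma$ a $\tilde g$-geodesic of $\tilde d$-length $\tau(y)+\epsilon$ reaches $g$-depth roughly $c \cdot (\text{residual budget})$ rather than the residual budget itself, so the reach grows by a factor $c$, while $d\tilde m = c^{-2} dm$.

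\medskip

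\noindent The crucial point producing the $\epsilon^{3/2}$ scaling is the \emph{transversality} of $\partial M(\tau)$ to $\partial\Sigma$ at $x_0$. Writing the residual travel budget as a function of the along-boundary coordinate $s$, the budget vanishes linearly in $s$ at the contact point (this is where $\partial M(\tau)$ crosses $\partial\Sigma$). Thus the sliver has width $O(\epsilon)$ in $n$ but extends over $s$-range $O(\epsilon)$ as well, and integrating the pointwise difference $\bigl(c^{-2}\cdot c - 1\bigr) = (c^{-1} - 1) < 0$ — wait, I must track signs carefully: it is the \emph{excess} volume $m - \tilde m$, and the dominant term comes from integrating a quantity that is linear in the residual budget over a triangular region, giving $\int_0^{O(\epsilon)} (\text{budget})^{?}\, ds$. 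I would carry out this one-dimensional integral in $n$ first (yielding the leading depth-dependence) and then the $s$-integral over the contact region; the half-power arises because the transverse budget enters to a power $1/2$ after the $n$-integration against the curvature of $\partial M(\tau)$. The localized measure $m_\psi$ with $\psi(x_0) > 0$ simply factors $\psi(x_0) > 0$ out of the leading term, giving $m_1(\epsilon,\psi,x_0) \sim c_0\,\psi(x_0)\,\epsilon^{3/2}$ with $c_0 > 0$, while the $\O(\epsilon^2)$ error absorbs curvature and the variation of $c$ and $\psi$ across the $O(\epsilon)$-patch.

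\medskip

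\noindent \textbf{The main obstacle} I expect is making the reach comparison between $d$ and $\tilde d$ precise and uniform near the interface, i.e.\ quantifying how $\tilde d(x, M(\tau))$ relates to $d(x, M(\tau))$ for $x$ at depth $n > 0$ inside $\Sigma$ when the minimizing path crosses $\partial\Sigma$. One must show the optimal $\tilde g$-geodesic refracts at $\partial\Sigma$ (a Snell-type condition) and estimate the length gain to sufficient order; the clean factor-$c$ heuristic holds only to leading order and the correction must be shown to be $O(\epsilon^2)$ after integration, not $O(\epsilon^{3/2})$. I would handle this by working to first order in the depth $n$ and exploiting that the contact region has both dimensions of size $O(\epsilon)$, so that all refraction-angle corrections are higher order in $\epsilon$ relative to the leading $\epsilon^{3/2}$ term. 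Establishing that the leading coefficient is strictly positive — rather than degenerating — is where the hypothesis $c > 1$ and the transversality of the smooth manifold $M(\tau) \cap U(x_0)$ both get used essentially.
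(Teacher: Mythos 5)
Your high-level strategy matches the paper's: work in (semi-)geodesic coordinates flattening $\p \Sigma$ near $x_0$, use Lemma \ref{lem_interesting_x} to localize the symmetric difference, and play the factor-$c$ gain in reach against the factor-$c^{-2}$ loss in the volume measure, so that the pointwise deficit $1 - c^{-1} > 0$ produces a positive leading term. However, there is a genuine error in your account of where the exponent $3/2$ comes from. You assert that the key point is \emph{transversality} of $\p M(\tau)$ to $\p \Sigma$ at $x_0$, with the residual travel budget vanishing \emph{linearly} in the along-boundary coordinate $s$, over an $s$-range of size $O(\epsilon)$. This is the wrong geometric picture: the hypothesis (\ref{boundary_intersection}) forces $M(\tau) \cap \Sigma^{int} = \emptyset$, so $M(\tau)$ lies entirely on one side of $\p \Sigma$ and its boundary is \emph{tangent} to $\p \Sigma$ at $x_0$, not transversal. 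The paper exploits exactly this: the function $\phi(x^2) := d\bigl((0,x^2), M(\tau)\bigr)$ is smooth, non-negative, and vanishes at $x^2 = 0$, hence $\phi'(0) = 0$ and $\phi(x^2) \le a (x^2)^2$. The residual budget $\epsilon - \phi(x^2)$ is therefore positive on an interval of length $\sim 2\sqrt{\epsilon/a}$, and the sliver has dimensions $O(\epsilon) \times O(\sqrt{\epsilon})$, giving
\begin{align*}
\int_{-\sqrt{\epsilon/a}}^{\sqrt{\epsilon/a}} (\epsilon - a (x^2)^2)\, dx^2 = \frac{4}{3\sqrt{a}}\,\epsilon^{3/2}.
\end{align*}
Your picture of a triangular contact region with both dimensions $O(\epsilon)$ would yield only an $O(\epsilon^2)$ lower bound, which is absorbed by the error term and proves nothing; your later remark that "the half-power arises ... after the $n$-integration against the curvature of $\p M(\tau)$" gestures at the right phenomenon but does not repair the count, because the half-power comes from the $s$-extent of the contact set (quadratic tangency), not from the $n$-integration.

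Two smaller points. First, you never address the portion of $\tilde M(\tau+\epsilon) \setminus M(\tau+\epsilon)$ lying \emph{outside} $\Sigma$ (waves that cross $\Sigma$ and exit); the paper's Lemma \ref{lem_x_1_bound_outside_Sigma} shows this contributes only $O(\epsilon^2)$, and some such argument is needed since there $\tilde m = m$ and the region is pure gain for $\tilde M$. Second, the worry you flag about Snell-type refraction corrections is largely a non-issue in the paper's argument: one does not need the optimal refracted geodesic, only the one-sided bound that any admissible path of $\tilde g$-length at most $\epsilon - \phi(s)$ inside $\Sigma$ has $x^1$-coordinate at most $(c + O(\epsilon))(\epsilon - \phi(s))$, obtained by integrating $\p_s\gamma^1$ against the metric; no stationarity condition at the interface is required.
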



Before proving Theorem \ref{thm_main_local},
let us introduce some notation and prove a couple of lemmas.
Let $x_0$ and $U(x_0)$ satisfy the 
assumptions of Theorem \ref{thm_main_local},
and let us consider such semi-geodesic coordinates 
in a neighborhood $U \subset U(x_0)$ of $x_0$ that $x_0 = 0$,
$\Sigma \cap U = \{ (x^1, x^2) \in B;\ x^1 \ge 0 \}$ and
\begin{equation}
\label{semi_geodesic_coordinates}
g(x^1, x^2) 
= (dx^1)^2 + h(x^1, x^2) (dx^2)^2,
\end{equation}
where $B$ is a neigborhood of the origin in $\R^2$
and $h$ is a stricly positive smooth function on $\bar B$.
We may choose $C_0 > 0$ such that 
for all $x = (x^1, x^2) \in U$ and $y = (y^1, y^2) \in U$,
\begin{equation}
\label{tilde_d_equivalent}
\frac{1}{C_0} \tilde d(x, y) 
\le |x^1 - y^1| + |x^2 - y^2| 
\le C_0 \tilde d(x, y).
\end{equation}

Let us define 
\begin{align*}
\rho(x) := d(x, M(\tau)),
\quad
\phi(x^2) := \rho(0, x^2).
\end{align*}
As $M(\tau) \cap U$ is a smooth manifold with boundary, the map $\rho$ is 
smooth in $U \setminus M(\tau)$ near $x_0$, whence also $\phi$ is 
smooth in a neighborhood of the origin. 
Non-negativity of $\phi$ and $\phi(0) = 0$
yield that $\phi'(0) = 0$.
In particular, there is a constant $a > 0$ such that 
\begin{equation}
\label{phi_domination}
\phi(x^2) \le a (x^2)^2
\end{equation}
in a neighborhood of origin.
Also $\rho(0) = 0$ and using boundary normal coordinates of $U \setminus M(\tau)$
we see that $\grad \rho(0) \ne 0$.
By (\ref{boundary_intersection}) we have that $\p_{x^1} \rho(0) > 0$.
Thus we may replace $U$ with a smaller neighborhood of $x_0$
still denoted by $U$ such that $\rho$ is smooth in $U$,
\begin{equation}
\label{distance_grows_with_x_1}
\p_{x^1} \rho(x) > 0, 
\quad x \in U \setminus M(\tau),
\end{equation}
and (\ref{phi_domination}) holds when $(0,x^2) \in U$.

\def\I{\mathcal I}
\begin{figure}[t]
\centering
\def\svgwidth{5cm}
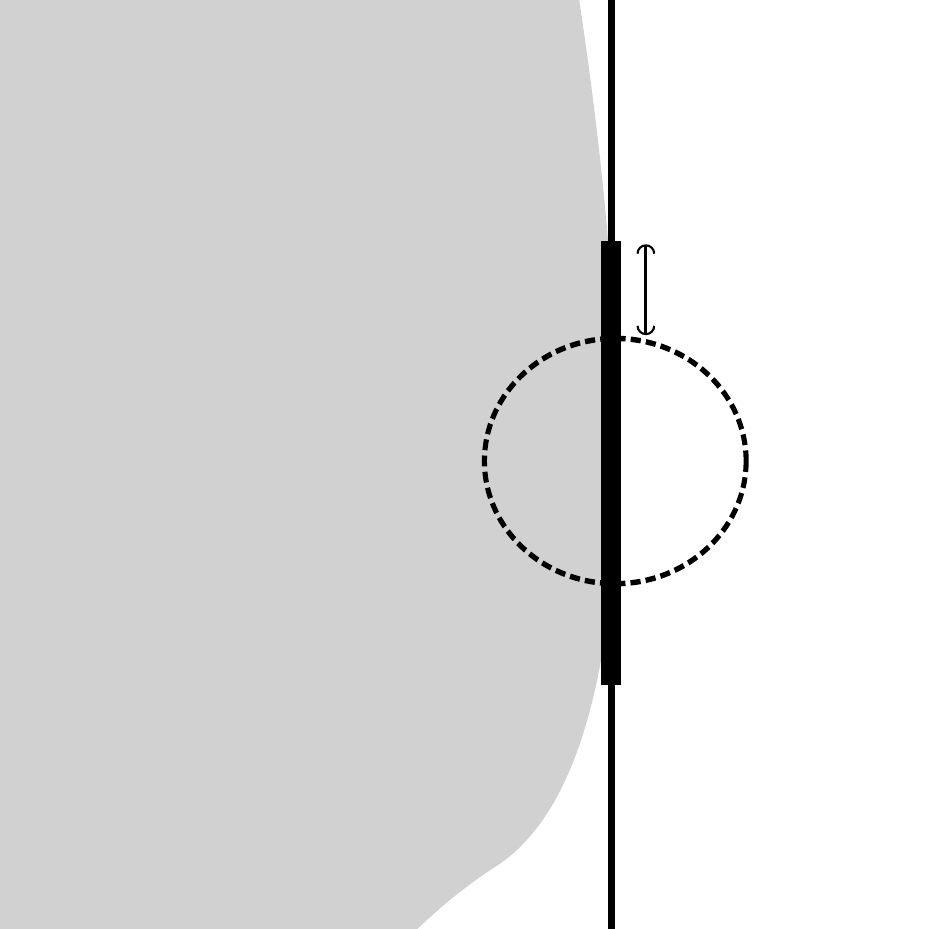
\caption{
A neighborhood of $x_0$ and the 
sets $\I_\delta$ and $V_\delta$.
}
\end{figure}
Let $\delta \in (0, 1)$.
As $\phi'(0) = 0$,
there is a neighborhood 
$\I_\delta \subset \R$ of origin such that 
$|\phi'(x^2)| \le \delta/C_0$ for all 
$x^2 \in \I_\delta$,
where $C_0$ is the constant in (\ref{tilde_d_equivalent}).
We may choose a neighborhood $V_\delta$ of $x_0$
and $\epsilon_\delta > 0$ such that 
for all $x = (x^1, x^2) \in M$
\begin{align*}
\tilde B(V_\delta, \epsilon_\delta)
\subset U, 
\quad
\tilde B(V_\delta, \epsilon_\delta) \cap \p \Sigma
\subset \{0\} \times \I_\delta, 
\end{align*}
where $\tilde B(V_\delta, \epsilon_\delta) := 
\{ x \in M;\ \tilde d(x, V_\delta) \le \epsilon_\delta\}$.
Next we study how the set 
$\tilde M(\tau + \epsilon) \cap V_\delta$ 
stretches in the $x^1$-direction compared to 
$M(\tau + \epsilon) \cap V_\delta$.
We show that the stretch is "of magnitude $c$"
in $\Sigma$ (Lemma \ref{lem_x_1_bound}), and that 
it is negligible in $M \setminus \Sigma$ 
(Lemma \ref{lem_x_1_bound_outside_Sigma}).

\begin{lemma}
\label{lem_phi_diff}
Let $\epsilon \in (0, \epsilon_\delta)$ and 
$x = (x^1, x^2) \in V_\delta \cap \tilde M(\tau + \epsilon)$.
If $x$ satisfies (\ref{interesting_x})
then there is $s \in \I_\delta$ such that
\begin{equation*}
\tilde d((x^1, x^2), (0, s)) \le \epsilon - \phi(s)
\le \frac{1}{1- \delta} (\epsilon - \phi(x^2))
+ \O(\epsilon^2).
\end{equation*}
In particular, if $\phi(x^2) \ge \epsilon$ 
then $|x^1| = \mathcal O(\epsilon^2)$.
\end{lemma}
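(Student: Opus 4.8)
The plan is to run the point $x$ through Lemma~\ref{lem_interesting_x}, localize the resulting boundary point $z$ onto the coordinate line $\{x^1=0\}$, and then trade the value $\phi(s)$ for $\phi(x^2)$ using that $\phi'$ is small on $\I_\delta$. First I would invoke Lemma~\ref{lem_interesting_x}: since $x\in\tilde M(\tau+\epsilon)$ satisfies (\ref{interesting_x}), there is $z\in\p\Sigma$ with $\tilde d(x,z)+d(z,M(\tau))\le\epsilon$. As $d(z,M(\tau))\ge 0$ this forces $\tilde d(x,z)\le\epsilon<\epsilon_\delta$, so $z\in\tilde B(V_\delta,\epsilon_\delta)\cap\p\Sigma\subset\{0\}\times\I_\delta$; write $z=(0,s)$ with $s\in\I_\delta$. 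Since $d(z,M(\tau))=\rho(0,s)=\phi(s)$, the estimate of Lemma~\ref{lem_interesting_x} reads $\tilde d((x^1,x^2),(0,s))\le\epsilon-\phi(s)$, which is the first claimed inequality.

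For the second inequality I would compare the two $x^2$-values. By (\ref{tilde_d_equivalent}), $|x^1|+|x^2-s|\le C_0\,\tilde d(x,(0,s))\le C_0(\epsilon-\phi(s))$, so in particular $|x^2-s|\le C_0\epsilon$. Shrinking $V_\delta$ so that its $x^2$-projection sits well inside $\I_\delta$ keeps the whole segment joining $x^2$ and $s$ inside $\I_\delta$, where $|\phi'|\le\delta/C_0$. Expanding $\phi$ to first order about $x^2$ gives $\phi(s)=\phi(x^2)+\phi'(x^2)(s-x^2)+\O(|s-x^2|^2)$; using $|\phi'(x^2)|\,|s-x^2|\le(\delta/C_0)\,C_0(\epsilon-\phi(s))=\delta(\epsilon-\phi(s))$ together with $|s-x^2|^2=\O(\epsilon^2)$ and rearranging yields $(1-\delta)(\epsilon-\phi(s))\le(\epsilon-\phi(x^2))+\O(\epsilon^2)$, i.e.\ the second bound after dividing by $1-\delta$. (The Lipschitz bound $|\phi(s)-\phi(x^2)|\le(\delta/C_0)|s-x^2|$ from the mean value theorem gives the same conclusion and in fact removes the $\O(\epsilon^2)$ term, so the error term is only a convenience.)

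The ``in particular'' clause is then immediate: if $\phi(x^2)\ge\epsilon$ the right-hand side of the second bound is $\O(\epsilon^2)$, forcing $\tilde d(x,(0,s))\le\epsilon-\phi(s)=\O(\epsilon^2)$, whence $|x^1|\le C_0\,\tilde d(x,(0,s))=\O(\epsilon^2)$ again by (\ref{tilde_d_equivalent}).

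I expect the only delicate point to be the bookkeeping in the middle step: ensuring, uniformly in $\epsilon$, that $s$, $x^2$ and the segment between them all lie in the set $\I_\delta$ on which the derivative bound $|\phi'|\le\delta/C_0$ holds---this is exactly what the construction of $V_\delta$ and $\epsilon_\delta$ is designed to guarantee. A secondary subtlety is that the $\tilde g$-geodesic realizing $\tilde d(x,(0,s))$ may cross $\p\Sigma$, across which $\tilde g$ is discontinuous; the crude two-sided comparison (\ref{tilde_d_equivalent}) is what lets the argument ignore this, and it is fortunate that its constant $C_0$ cancels against the $1/C_0$ in the bound on $\phi'$, leaving the clean factor $1/(1-\delta)$ that will be needed when $\delta\to 0$ later.
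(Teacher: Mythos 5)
Your proposal is correct and follows essentially the same route as the paper: invoke Lemma~\ref{lem_interesting_x}, identify $z=(0,s)$ with $s\in\I_\delta$ via the construction of $V_\delta$ and $\epsilon_\delta$, and convert $\phi(s)$ into $\phi(x^2)$ using the bound $|\phi'|\le\delta/C_0$ on $\I_\delta$. The only (immaterial) difference is that you Taylor-expand $\phi$ about $x^2$ while the paper expands about $s$, and your observation that a mean-value bound would even eliminate the $\O(\epsilon^2)$ term is a correct aside.
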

\begin{proof}
By Lemma \ref{lem_interesting_x}
there is $z \in \p \Sigma$ such that
$\tilde d(x, z) \le \epsilon - d(z, M(\tau))$.
As $x \in V_\delta$ and $\epsilon < \epsilon_\delta$,
we have that $z \in U$.
Let us define $s \in \R$ by $(0, s) = z$.
Then $s \in \I_\delta$, and
\begin{equation*}
\tilde d(x, z) \le \epsilon - d(z, M(\tau))
= \epsilon - \phi(s).
\end{equation*}

By (\ref{tilde_d_equivalent}) we have that
$|x^2 - s| \le C_0 \tilde d(x, z) 
\le C_0 (\epsilon - \phi(s))
\le C_0 \epsilon$,
and by Taylor's theorem 
\begin{equation*}
\phi(x^2) = \phi(s) + \phi'(s) (x^2 - s) + R(x^2, s),
\end{equation*}
where $|R(x^2, s)| \le \norm{\phi''}_{L^\infty(\I_\delta)} C_0^2 \epsilon^2 = \O(\epsilon^2)$. Hence
\begin{align*}
\epsilon - \phi(s) 
&= \epsilon - \phi(x^2) 
+ \phi'(s) (x^2 - s) + R(x^2, s)
\\&\le 
\epsilon - \phi(x^2)
+ \frac{\delta}{C_0} C_0(\epsilon - \phi(s)) + R(x^2, s).
\end{align*}
Thus 
\begin{align*}
\epsilon - \phi(s)
&\le 
\frac{1}{1 - \delta} 
\ll(\epsilon - \phi(x^2) + R(x^2, s)\rr)
\\&= 
\frac{1}{1 - \delta} (\epsilon - \phi(x^2))
+ \O(\epsilon^2).
\end{align*}
By (\ref{tilde_d_equivalent})
we have that 
$|x^1| \le C_0 d((x^1, x^2), (0, s))$
whence $|x^1| = \O(\epsilon^2)$ if 
$\epsilon - \phi(x^2)$ is negative.
\end{proof}

\begin{lemma}
\label{lem_x_1_bound}
Let $\epsilon \in (0, \epsilon_\delta)$ and 
$x = (x^1, x^2) \in V_\delta \cap \tilde M(\tau + \epsilon)$.
If $x \in \Sigma$ then
\begin{align*}
x^1 
\le
\frac{c(0, x^2)}{1-\delta} 
(\epsilon - \phi(x^2)) + \O(\epsilon^2).
\end{align*}
\end{lemma}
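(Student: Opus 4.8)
The plan is to complement the upper bound on $\tilde d(x, z)$ furnished by Lemma \ref{lem_phi_diff} with a matching lower bound that sees the factor $c$. Since $x \in \Sigma$, the point $x$ satisfies (\ref{interesting_x}), so Lemma \ref{lem_phi_diff} applies and produces $s \in \I_\delta$ such that, writing $z := (0, s) \in \p \Sigma$,
\begin{equation*}
\tilde d(x, z) \le \frac{1}{1-\delta}(\epsilon - \phi(x^2)) + \O(\epsilon^2).
\end{equation*}
Moreover $\tilde d(x, z) \le \epsilon - \phi(s) \le \epsilon$, so (\ref{tilde_d_equivalent}) gives the a priori bound $x^1 = \O(\epsilon)$, which I would use repeatedly to turn factors of the form $\O(\epsilon)\cdot x^1$ into $\O(\epsilon^2)$.

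The heart of the argument is the lower bound $\tilde d(x, z) \ge c(0, x^2)^{-1} x^1 - \O(\epsilon^2)$. First I would let $\gamma$ be an arbitrary path from $z$ to $x$ with $\tilde l(\gamma) < \epsilon_\delta$; since $\tilde d(x, z) < \epsilon_\delta$, near-minimizing paths are of this type, and each such $\gamma$ stays in $\tilde B(V_\delta, \epsilon_\delta) \subset U$, so the coordinates (\ref{semi_geodesic_coordinates}) are available along $\gamma$. Because $g$, and hence $\tilde g$, is diagonal in these coordinates, I would estimate
\begin{equation*}
\tilde l(\gamma) = \int \sqrt{\tilde g_{jk}\dot\gamma^j \dot\gamma^k}\, dt \ge \int \sqrt{\tilde g_{11}(\gamma)}\, |\dot\gamma^1|\, dt.
\end{equation*}
Here $\tilde g_{11} = c^{-2}$ inside $\Sigma$ (using $g_{11} = 1$) and $\tilde g_{11} = 1$ outside $\Sigma$, so that $\sqrt{\tilde g_{11}} = 1 \ge c(0,x^2)^{-1}$ outside $\Sigma$ trivially (as $c > 1$), while inside $\Sigma$ the smoothness of $c$ together with $\mathrm{diam}\,\gamma = \O(\epsilon)$ and $|s - x^2| = \O(\epsilon)$ gives $\sqrt{\tilde g_{11}} = c^{-1} \ge c(0, x^2)^{-1} - \O(\epsilon)$. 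Since the total variation of $\gamma^1$ is at least its net change $x^1 \ge 0$, I obtain $\tilde l(\gamma) \ge (c(0,x^2)^{-1} - \O(\epsilon))\, x^1 = c(0,x^2)^{-1} x^1 - \O(\epsilon^2)$, and taking the infimum over $\gamma$ yields the claimed lower bound for $\tilde d(x, z)$.

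Combining the two bounds gives $c(0,x^2)^{-1} x^1 \le \frac{1}{1-\delta}(\epsilon - \phi(x^2)) + \O(\epsilon^2)$, and multiplying by $c(0,x^2)$ (which is bounded on $U$) produces exactly the asserted inequality; note that no case distinction on the sign of $\epsilon - \phi(x^2)$ is needed, since both bounds hold unconditionally. I expect the main obstacle to be the lower bound: one must recognize that raising the $x^1$-coordinate costs at least $c^{-1}$ per unit length in the metric $\tilde g$ everywhere along the path -- the cost outside $\Sigma$ being even larger -- and one must control the $\O(\epsilon)$ oscillation of $c$ along $\gamma$ by the a priori smallness $x^1 = \O(\epsilon)$. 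Working with arbitrary near-minimizing paths rather than genuine geodesics sidesteps the non-smoothness of $\tilde g$ across $\p\Sigma$.
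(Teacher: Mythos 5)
Your proof is correct and follows essentially the same route as the paper's: Lemma \ref{lem_phi_diff} supplies the point $(0,s)$ together with the bound on $\tilde d(x,z)$, and the diagonal form (\ref{semi_geodesic_coordinates}) gives that a unit of $\tilde g$-length can change $x^1$ by at most $c(0,x^2)+\O(\epsilon)$. The only (harmless) differences are that the paper integrates $\p_s\gamma^1$ over the final arc of the path inside $\Sigma$, which starts on $\p\Sigma$ where $x^1=0$, whereas you integrate $|\dot\gamma^1|$ over the whole path using that $\tilde g_{11}=1\ge c(0,x^2)^{-2}$ outside $\Sigma$, and that you phrase the estimate as a lower bound on $\tilde l(\gamma)$ rather than an upper bound on $x^1$.
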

\begin{proof}
If $x \in \p \Sigma$  then $x^1 = 0$. Thus we may assume
that $x \in \Sigma^{int}$.
Let $s \in \I_\delta$ be as in Lemma \ref{lem_phi_diff}.
Let 
$\gamma(t) = (\gamma^1(t), \gamma^2(t))$,
$t \in [0, \ell]$,
be a path from $(0, s)$ to $x$
in the manifold $(M, \tilde g)$
that is shorter than $\epsilon - \phi(s)$.
Let $a \in [0, \ell)$ be the parameter satisfying
$\gamma((a, \ell]) \subset \Sigma^{int}$
and $\gamma(a) \in \p \Sigma$.
We may assume that $\gamma|_{[a,\ell]}$
is a geodesic with respect to metric $\tilde g$.

Note that $\tilde d(\gamma(t), (0,x^2)) = \O(\epsilon)$.
Indeed, by (\ref{tilde_d_equivalent}) we have
\begin{align*}
\tilde d((0,x^2), (0,s))
&\le 
C_0 |x^2 - s|
\le C_0 (|x^1| + |x^2 - s|)
\\&\le 
C_0^2 \tilde d(x, (0,s))
\le C_0^2 \tilde l(\gamma) 
= \O(\epsilon),
\end{align*}
whence  
\begin{align*}
\tilde d(\gamma(t), (0,x^2))
&\le 
\tilde d(\gamma(t), (0,s))
+ \tilde d((0,s), (0,x^2))
\\&\le 
\tilde l(\gamma) + \O(\epsilon)
= \O(\epsilon).
\end{align*}
Moreover, 
\begin{align*}
c(\gamma(t)) 
&\le 
c(0, x^2) +
\norm{\grad c}_{L^\infty(\Sigma \cap U)} 
\tilde d(\gamma(t), (0,x^2))
\\&= c(0, x^2) + \O(\epsilon),
\quad t \in (a, \ell],
\end{align*}
whence by (\ref{semi_geodesic_coordinates})
\begin{align*}
x^1 
&= \gamma^1(\ell) 
= \int_a^\ell \p_s \gamma^1(s) ds
\\&\le \int_a^\ell 
  \frac{c(0,x^2) + \O(\epsilon)}{c(\gamma(s))} 
  \sqrt{|\p_s \gamma^1(s)|^2 + h(\gamma(s)) |\gamma^2(s)|^2}   
  ds
\\&= (c(0,x^2) + \O(\epsilon)) \int_a^\ell 
  |\p_s \gamma(s)|_{\tilde g} ds
\le (c(0,x^2) + \O(\epsilon)) \tilde l(\gamma)
\\&\le (c(0,x^2) + \O(\epsilon)) (\epsilon - \phi(s))
= c(0,x^2)(\epsilon - \phi(s)) + \O(\epsilon^2).
\end{align*}
By Lemma \ref{lem_phi_diff} we have
\begin{align*}
x^1 \le 
\frac{c(0,x^2)}{1-\delta} 
(\epsilon - \phi(x^2)) + \O(\epsilon^2).
\end{align*}
\end{proof}

\begin{lemma}
\label{lem_x_1_bound_outside_Sigma}
Let $\epsilon \in (0, \epsilon_\delta)$ and 
$x = (x^1, x^2) \in V_\delta \cap \tilde M(\tau + \epsilon)$.
Suppose that $x \notin M(\tau + \epsilon) \cup \Sigma$. Then $|x^1| = \O(\epsilon^2)$.
\end{lemma}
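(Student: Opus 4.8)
The plan is to reduce the claim to the final assertion of Lemma~\ref{lem_phi_diff}, which already guarantees $|x^1| = \O(\epsilon^2)$ as soon as $\phi(x^2) \ge \epsilon$. Since $x \in V_\delta \cap \tilde M(\tau + \epsilon)$ and, by hypothesis, $x \notin M(\tau + \epsilon)$, the point $x$ satisfies the second alternative in (\ref{interesting_x}), so Lemma~\ref{lem_phi_diff} is applicable. Moreover $x \notin \Sigma$ together with $\Sigma \cap U = \{x^1 \ge 0\}$ forces $x^1 < 0$. Thus the entire task is to establish $\phi(x^2) \ge \epsilon$, after which the desired bound is immediate.

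To prove $\phi(x^2) \ge \epsilon$ I would first record a purely metric consequence of $x \notin M(\tau + \epsilon)$, namely $\rho(x) = d(x, M(\tau)) > \epsilon$. Indeed, if instead $\rho(x) \le \epsilon$, then by compactness there is $w \in M(\tau)$ with $d(x, w) \le \epsilon$, and by the definition of $M(\tau)$ there is $y \in \p M$ with $d(w, y) \le \tau(y)$; the triangle inequality for $d$ then yields $d(x, y) \le \tau(y) + \epsilon$, i.e. $x \in M(\tau + \epsilon)$, a contradiction. This step is entirely about the background metric $g$, which is legitimate since $\rho$, $M(\tau)$ and $M(\tau + \epsilon)$ are all defined through $d$, and the hypothesis $x \notin \Sigma$ never enters here.

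Second, I would compare $\rho(x)$ with $\phi(x^2) = \rho(0, x^2)$ by monotonicity in $x^1$. Since $x \notin M(\tau + \epsilon)$ we have $x \notin M(\tau)$, so $\rho(x) > 0$. Because $M(\tau) \cap U$ is a smooth manifold with boundary and $\p_{x^1}\rho(0) > 0$, the boundary $\p M(\tau)$ is locally a graph $x^1 = \beta(x^2)$ with $M(\tau)$ lying in $\{x^1 \le \beta(x^2)\}$; as $x^1 < 0$ and $|x^1| = \O(\epsilon)$ (this follows from Lemma~\ref{lem_interesting_x} and (\ref{tilde_d_equivalent}), exactly as at the start of the proof of Lemma~\ref{lem_phi_diff}), the segment $t \mapsto (t, x^2)$, $t \in [x^1, 0]$, stays in $U \setminus M(\tau)$ for $\epsilon$ small. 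Then (\ref{distance_grows_with_x_1}) makes $t \mapsto \rho(t, x^2)$ strictly increasing along it, so $\rho(x) = \rho(x^1, x^2) \le \rho(0, x^2) = \phi(x^2)$. Combining the two steps gives $\phi(x^2) \ge \rho(x) > \epsilon$, and Lemma~\ref{lem_phi_diff} concludes.

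The conceptual point, and the only place the hypothesis $x \notin \Sigma$ is essential, is that outside $\Sigma$ the metrics coincide, $\tilde g = g$, so there is no \emph{stretch} to exploit: the sole way for $x$ to lie in $\tilde M(\tau + \epsilon)$ while avoiding $M(\tau + \epsilon)$ is for its foot point $(0, x^2)$ on $\p \Sigma$ to already sit at $g$-distance at least $\epsilon$ from $M(\tau)$, leaving no room for a first-order excursion in $x^1$. I expect the main nuisance to be the bookkeeping in the second step, i.e. confirming that the comparison segment remains inside the region where $\rho$ is smooth and (\ref{distance_grows_with_x_1}) is valid, rather than any genuinely hard estimate; both the triangle-inequality argument and the monotonicity comparison are soft.
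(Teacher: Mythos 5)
Your proposal is correct and follows essentially the same route as the paper: establish $\rho(x) > \epsilon$ by a triangle-inequality argument, use the monotonicity (\ref{distance_grows_with_x_1}) together with $x^1 < 0$ to get $\phi(x^2) = \rho(0,x^2) \ge \rho(x) > \epsilon$, and invoke the final assertion of Lemma \ref{lem_phi_diff}. The only cosmetic difference is that the paper proves the slightly stronger statement $d(x, M(\tau+\epsilon)) \le \rho(x) - \epsilon$ by walking distance $\epsilon$ along a shortest path from $M(\tau)$ to $x$, whereas you argue by contraposition; both reduce to the same inequality, and your extra care about the comparison segment staying in $U \setminus M(\tau)$ is a point the paper leaves implicit.
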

\begin{proof}
As $x \notin M(\tau + \epsilon)$,
we have that  
\begin{equation*}
0 < d(x, M(\tau + \epsilon)) \le \rho(x) - \epsilon.
\end{equation*}
Indeed, there is $z \in M(\Gamma, \tau)$,
and a path $\gamma : [0, \ell] \to M$
from $z$ to $x$ such that $l(\gamma) = \rho(x)$.
Moreover, there is $y \in \p M$ such that
$d(y, z) \le \tau(y)$.
We have $l(\gamma) > \epsilon$, since otherwise
$d(y, x) \le d(y, z) + l(\gamma) \le \tau(y) + \epsilon$,
whence a contradiction $x \in M(\tau + \epsilon)$.
Let $t \in [0, \ell]$ be the parameter satisfying
$l(\gamma|_{[0, t]}) = \epsilon$.
Then $d(y, \gamma(t)) \le \tau(y) + \epsilon$
and $\gamma(t) \in M(\tau + \epsilon)$.
Moreover, 
\begin{align*}
d(x, M(\tau + \epsilon)) 
\le d(x, \gamma(t)) \le l(\gamma|_{[t, \ell]})
= l(\gamma) - l(\gamma|_{[0, t]})
= \rho(x) - \epsilon.
\end{align*}

By (\ref{distance_grows_with_x_1})
the map, $s \mapsto \rho(s, x^2)$,
is increasing. Note that $x^1 < 0$ as $x \notin \Sigma$, 
whence 
\begin{equation*}
\epsilon < \rho(x^1, x^2) \le \rho(0, x^2) = \phi(x^2).
\end{equation*}
Thus $|x^1| = \O(\epsilon^2)$ by Lemma \ref{lem_phi_diff}.
\end{proof}

\begin{proof}[Proof of Theorem \ref{thm_main_local}]
Let $\epsilon \in (0, \epsilon_\delta)$.
We define
\begin{align*}
V(\epsilon) 
:= V_\delta \cap M(\tau + \epsilon) \cap \Sigma,
&\quad
\tilde V(\epsilon) 
:= V_\delta \cap \tilde M(\tau + \epsilon) \cap \Sigma,
\\
W(\epsilon)
:= V_\delta \cap M(\tau + \epsilon) \setminus \Sigma,
&\quad
\tilde W(\epsilon)
:= V_\delta \cap \tilde M(\tau + \epsilon) \setminus \Sigma.
\end{align*}
As $m_\psi(E) = \tilde m_\psi(E)$
for a measurable set $E \subset M \setminus \Sigma$,
Lemma \ref{lem_x_1_bound_outside_Sigma} yields
\begin{align*}
\tilde m_\psi(\tilde W(\epsilon)) 
- m_\psi(W(\epsilon))
= m_\psi(\tilde W(\epsilon) \setminus W(\epsilon))
= \O(\epsilon^2).
\end{align*}
Let us denote $\alpha := (1-\delta)^{-1}$.
By Lemma \ref{lem_x_1_bound}
\begin{equation}
\label{tilde_V_epsilon_domination}
\tilde V(\epsilon) 
\subset 
\{ (x^1, x^2);\ 
  0 \le x^1 \le \alpha c(0, x^2)(\epsilon - \phi(x^2)) 
  + \O(\epsilon^2),\
  x^2 \in \I_\delta \}.
\end{equation}
We define
$\I_\delta(\epsilon) = \{x^2 \in \I_\delta;\ \epsilon - \phi(x^2) \ge 0\}$ and
\begin{equation*}
\tilde V_1(\epsilon) 
:= 
\tilde V(\epsilon) \cap \{ (x^1, x^2) \in V_\delta;\
x^2 \in \I_\delta(\epsilon) \}.
\end{equation*}
Then $\tilde m_\psi(\tilde V(\epsilon) \setminus \tilde V_1(\epsilon)) = \O(\epsilon^2)$.

Note, that $\tilde \psi(x) := 
\psi(x) |g(x)|^{1/2}$
is smooth, whence
\begin{align*}
\tilde \psi(x)
\le 
\tilde \psi(0, x^2) + 
\norm{\p_{x^1} \tilde \psi}_{L^\infty(\Sigma \cap \bar U)} C \epsilon,
\quad x \in \tilde V(\epsilon).
\end{align*}
Similarly 
$c(x)^{-2} \tilde \psi(x) \le c^{-2}(0, x^2) \tilde \psi(0, x^2)
+ \O(\epsilon)$.
We denote 
$\underline c = \min_{\p \Sigma} c(x)$
and
\begin{equation*}
m_1(\epsilon, \psi) := 
\int_{\I_\delta(\epsilon)}
\tilde \psi(0, x^2) (\epsilon - \phi(x^2)) dx^2.
\end{equation*}
Then
\begin{align*}
&\tilde m_\psi(\tilde V_1(\epsilon))
=
\int_{\tilde V_1(\epsilon)} c(x)^{-2} \tilde \psi(x) dx
\\&\quad\le 
\int_{\I_\delta(\epsilon)} 
\int_0^{\alpha c(0, x^2)(\epsilon - \phi(x^2)) 
  + \O(\epsilon^2)} c(x)^{-2} \tilde \psi(x) dx^1 dx^2
\\&\quad\le
\int_{\I_\delta(\epsilon)} 
\ll( c(0, x^2)^{-2} \tilde \psi(0, x^2) + \O(\epsilon) \rr)
\alpha c(0, x^2) (\epsilon - \phi(x^2)) dx^2
+ \O(\epsilon^2)
\\&\quad\le
\alpha \underline c^{-1}\ m_1(\epsilon, \psi)
+ \O(\epsilon^2).
\end{align*}

By (\ref{semi_geodesic_coordinates}) we have
\begin{align*}
\{ (x^1, x^2);\ 
  0 \le x^1 \le \epsilon - \phi(x^2),\ 
  x^2 \in \I_\delta(\epsilon) \}
\subset V(\epsilon),
\end{align*}
whence
\begin{align*}
m_\psi(V(\epsilon))
&=
\int_{V(\epsilon)} \tilde \psi(x) dx
\ge 
\int_{\I_\delta(\epsilon)}
 \int_0^{\epsilon - \phi(x^2)} \tilde \psi(x) dx^1 dx^2
\\&=
m_1(\epsilon, \psi) - \O(\epsilon^2).
\end{align*}

By combining the above results we have
\begin{align*}
&m_\psi(V_\delta \cap M(\tau + \epsilon))
-\tilde m_\psi(V_\delta \cap \tilde M(\tau + \epsilon)) 
\\&\quad= 
m_\psi(V(\epsilon))
- \tilde m_\psi(\tilde V(\epsilon)) 
+ m_\psi(W(\epsilon))
- \tilde m_\psi(\tilde W(\epsilon)) 
\\&\quad= 
m_\psi(V(\epsilon)) 
- \tilde m_\psi(\tilde V_1(\epsilon))
- \O(\epsilon^2)
\\&\quad\ge
 (1 - \alpha \underline c^{-1}) m_1(\epsilon, \psi)
- \O(\epsilon^2).
\end{align*}
As $\underline c > 1$, we may choose 
small enough $\delta \in (0, 1)$
so that $1 - \alpha \underline c^{-1} > 0$.
We define
\begin{align*}
V(x_0) := V_\delta, 
\quad
\epsilon(x_0) := \epsilon_\delta,
\quad
\quad m_1(\epsilon, \psi, x_0) :=
(1 - \alpha \underline c^{-1}) m_1(\epsilon, \psi).
\end{align*}

By (\ref{phi_domination}) we have
$\I(\epsilon) := [-\sqrt{\epsilon / a}, \sqrt{\epsilon / a}] \subset \I_\delta(\epsilon)$
for small enough $\epsilon > 0$.
Note that 
\begin{align*}
\tilde \psi(0, x^2)
\ge 
\tilde \psi(0) -
\frac{C \norm{\p_{x^2} \tilde \psi(0, \cdot)}_{L^\infty(\I_\delta)}} 
{\sqrt{a}} \sqrt{\epsilon},
\quad x^2 \in \I(\epsilon).
\end{align*}
Hence
\begin{align*}
m_1(\epsilon, \psi)  
&\ge 
\int_{-\sqrt{\epsilon / a}}^{\sqrt{\epsilon / a}}
\tilde \psi(0, x^2) (\epsilon - a(x^2)^2) dx^2
\\&\ge
(\tilde \psi(0) - \O(\sqrt{\epsilon}))
\int_{-\sqrt{\epsilon / a}}^{\sqrt{\epsilon / a}} (\epsilon - a(x^2)^2) dx^2
\\&= 
(\tilde \psi(0) - \O(\sqrt{\epsilon}))
\frac{4}{3\sqrt{a}} \epsilon^{3/2}
= 
\frac{4 \tilde \psi(0)}{3\sqrt{a}}\epsilon^{3/2}
- \O(\epsilon^2).
\end{align*}
Hence $\psi(0) > 0$ implies that
\begin{align*}
\liminf_{\epsilon \to 0} \epsilon^{-3/2} m_1(\epsilon, \psi) > 0.
\end{align*}
\end{proof}

\begin{proof}[Proof of Theorem \ref{thm_main}]
As $M(\tau) \cap \Sigma$ is compact,
there are 
\begin{align*}
x_1, \dots, x_N \in M(\tau) \cap \Sigma
\end{align*}
such that with the neighborhoods $V_j := V(x_j)$,
$j = 1, \dots, N$, given by Theorem \ref{thm_main_local},
we have 
\begin{align*}
M(\tau) \cap \Sigma \subset \bigcup_{j=1}^N V_j.
\end{align*}
Let us choose a neighborhood $W$ of $M(\tau) \cap \Sigma$ such that $\bar W \subset \bigcup_{j=1}^N V_j$.
Let $\epsilon > 0$ satisfy
\begin{align*}
\epsilon < \epsilon_W
\quad \text{and} \quad
\epsilon < \epsilon(x_j),\ j=1, \dots, N,
\end{align*}
where $\epsilon_W$ is the constant of Lemma 
\ref{lem_the_difference_set}.
Then by Lemma \ref{lem_the_difference_set}
\begin{align*}
\tilde M(\tau + \epsilon) \setminus W
= M(\tau + \epsilon) \setminus W
\subset M \setminus \Sigma.
\end{align*}
Indeed, we have always $M(\tau + \epsilon) \subset \tilde M(\tau + \epsilon)$ and
if $x \in \tilde M(\tau + \epsilon) \setminus W$
then by Lemma \ref{lem_the_difference_set}
we have $x \in M(\tau + \epsilon)$
and $x \in M \setminus \Sigma$.
Hence 
\begin{align*}
\tilde m(\tilde M(\tau + \epsilon) \setminus W) = m(M(\tau + \epsilon) \setminus W).
\end{align*}

Let $(\psi_j)_{j=1}^N$ be a partition of unity
of $W$ subordinate to $(V_j)_{j=1}^N$
satisfying $\psi_j(x_j) \ne 0$, $j=1, \dots, N$.
Then by Theorem \ref{thm_main_local}
there are $c_1 > 0$ and $\epsilon_1 > 0$ such that
for all $\epsilon \in (0, \epsilon_1)$
\begin{align*}
&
m(M(\tau + \epsilon) \cap W)
- \tilde m(\tilde M(\tau + \epsilon) \cap W)
\\&\quad=
\sum_{j=1}^N \ll( m_{\psi_j}(M(\tau + \epsilon) 
\cap V_j)) - \tilde m_{\psi_j}(\tilde M(\tau + \epsilon) \cap V_j) \rr)
\\&\quad\ge
\sum_{j=1}^N m_1(\epsilon, \psi_j, x_j) - \O(\epsilon^2)
\ge
c_1 \epsilon^{3/2} - \O(\epsilon^2).
\end{align*}
Thus for small $\epsilon > 0$
\begin{align*}
\frac{m(M(\tau + \epsilon)) -\tilde m(\tilde M(\tau + \epsilon))}
{\epsilon^{3/2}}
\ge c_1 - \O(\epsilon^{1/2}).
\end{align*}
\end{proof}

\begin{figure}[t]
\centering
\def\svgwidth{5cm}
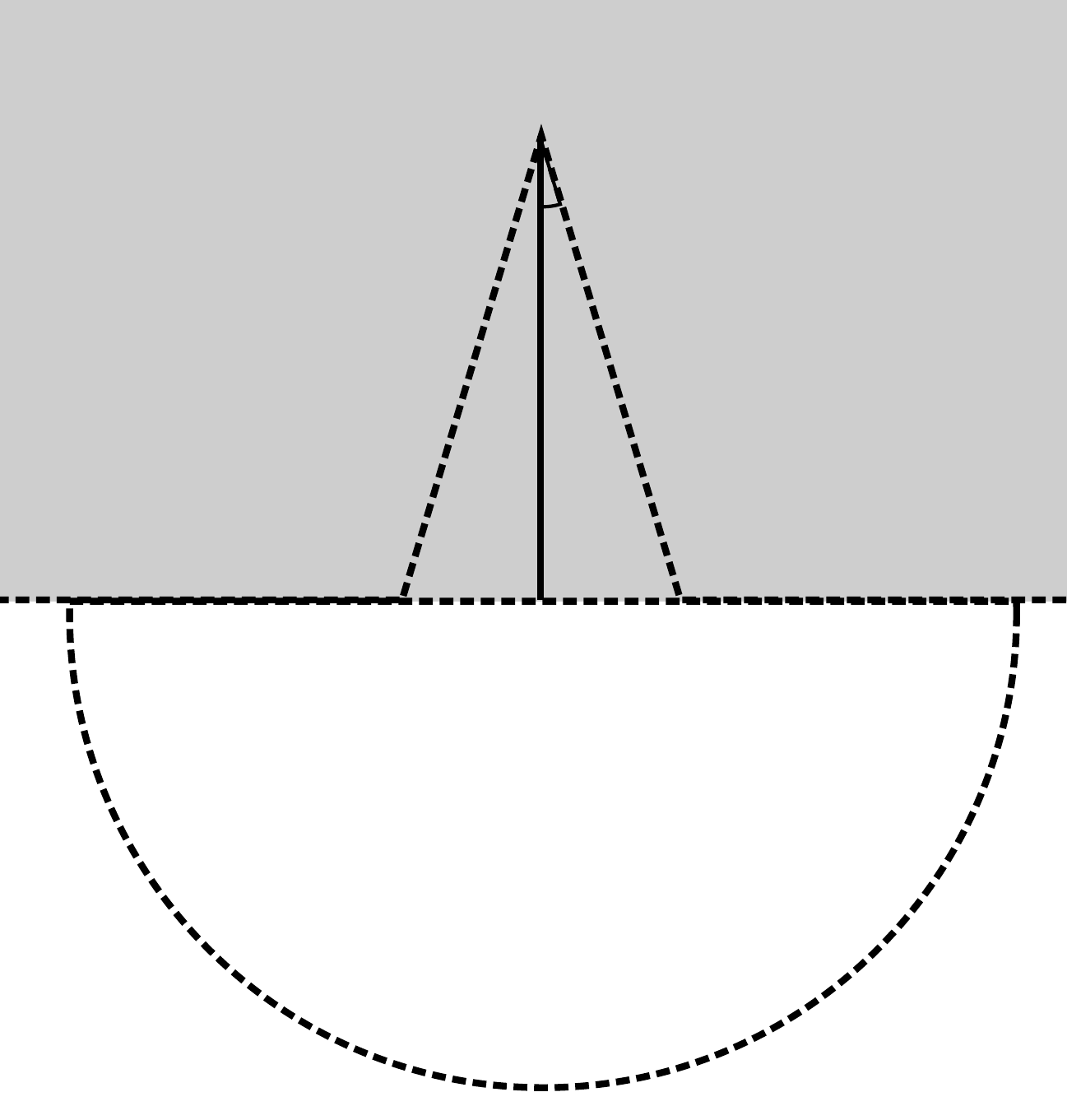
\caption{
The domain of influence $M(\tau)$
is the grey area, that is, the upper half plane.
The inclusion $\Sigma$ is the union of the 
lower half plane and the
triangle of height $h$.
Let $g$ be the Euclidean metric and
let $c(x)=2$, $x \in \Sigma$.
Then $\tilde M(\tau)$
contains the half disk of radius $h$.
}
\label{fig_sector_small_angle}
\end{figure}

\def\T{\mathcal T}
\def\H{\mathcal H}
\begin{example}
Let us consider the geometry described in Figure \ref{fig_sector_small_angle}.
We denote by $\T$ the triangle in the figure
and suppose that its height $h = 1$.
Moreover, let us denote by $\H$ the half disk in the figure. 
Then for small $\alpha > 0$ we have that
$\tilde m(\tilde M(\tau))
> m(M(\tau))$.
\end{example}

Indeed, we have for small $\alpha > 0$ that 
\begin{align*}
\tilde m(\tilde M(\tau) \cap \Sigma)
&= \frac{1}{4} m(\tilde M(\tau) \cap \Sigma)
\ge \frac{1}{4} \ll(m(\T) + m(\H) \rr)
\\&= 
\frac{1}{4} \ll(\tan \alpha + \pi/2 \rr)
> \tan \alpha = m(\T)
= m(M(\tau) \cap \Sigma).
\end{align*}
Moreover, it is always true that 
\begin{equation*}
\tilde m(\tilde M(\tau) \setminus \Sigma) 
= m(\tilde M(\tau) \setminus \Sigma) 
\ge m(M(\tau) \setminus \Sigma).
\end{equation*}

In the example, $\Sigma$ is non-smooth but it is 
clear that we may smoothen $\Sigma$ so that 
the change in the volume is negligible.
Thus the example shows that if we allow
$M(\tau)$ to penetrate deep into $\Sigma$
then the volume $\tilde m(\tilde M(\tau))$
may become larger than the volume $m(M(\tau))$.


\section{Distance to the inclusion}
\label{sec_distance}

To simplify the notation let us denote for $\tau \in C(\p M)$,
\def\vol{\mathcal V}
\begin{equation*}
\vol(\tau) := m(M(\tau)), 
\quad 
\tilde \vol(\tau) := \tilde m(\tilde M(\tau)).
\end{equation*}
In this section we prove the following two theorems.

\begin{theorem}
\label{thm_known_bg}
Suppose that $(\hat M, g)$ is a complete manifold without boundary.
Let $x \in \p M$, $r > 0$ 
and define $\tau_r(y) = r - \hat d(x,y)$.
Then
\begin{align*}
\hat d(x, \Sigma) = 
\max \{r > 0;\ \vol(\tau_r) = \tilde \vol(\tau_r) \}.
\end{align*}
\end{theorem}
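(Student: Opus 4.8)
**

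The plan is to establish the two inequalities separately, exploiting the structure of the problem where the test function $\tau_r(y) = r - \hat d(x,y)$ generates balls centered at the boundary point $x$. First I would observe that for this particular choice of $\tau_r$, the domain of influence $M(\tau_r)$ is exactly the set $\{z \in M;\ \text{there is } y \in \p M \text{ with } d(z,y) \le r - \hat d(x,y)\}$, which relates to the metric ball $\hat B(x, r)$ through the triangle inequality. The key preliminary observation is that when $r \le \hat d(x, \Sigma)$, the relevant ball does not reach $\Sigma$, so the background metric $g$ and the perturbed metric $\tilde g$ agree on the region swept out, forcing $M(\tau_r) = \tilde M(\tau_r)$ and hence $\vol(\tau_r) = \tilde \vol(\tau_r)$. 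This gives the inequality $\hat d(x, \Sigma) \le \max\{r > 0;\ \vol(\tau_r) = \tilde\vol(\tau_r)\}$.

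For the reverse inequality, I would argue that once $r > \hat d(x, \Sigma)$, the equality of volumes must fail. Here is where Theorem \ref{thm_main} does the work: for $r$ slightly larger than $\hat d(x, \Sigma)$, the domain $M(\tau_r)$ just begins to touch $\Sigma$ at its boundary, so we are in the regime $M(\tau) \cap \Sigma \ne \emptyset$ while $M(\tau) \cap \Sigma^{int} = \emptyset$, exactly the hypothesis (\ref{boundary_intersection}). Writing $r = r_0 + \epsilon$ where $r_0 = \hat d(x,\Sigma)$ and applying Theorem \ref{thm_main} with $\tau = \tau_{r_0}$ (noting $\tau_{r_0 + \epsilon} = \tau_{r_0} + \epsilon$), the liminf (\ref{liminf_test}) being strictly positive shows that $\vol(\tau_r) - \tilde\vol(\tau_r) > 0$ for all small enough $\epsilon > 0$, so equality fails just past $r_0$. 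Combined with the first part, this pins down the maximum exactly at $\hat d(x, \Sigma)$.

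The main obstacle I anticipate is verifying the smoothness hypothesis of Theorem \ref{thm_main}: one must check that $M(\tau_{r_0}) \cap U_\Sigma$ is an embedded smooth manifold with boundary in a neighborhood of $\Sigma$, at the critical radius $r_0$. Since $(\hat M, g)$ is an arbitrary complete manifold (not assumed simple here), the ball $B(x, r_0)$ may have conjugate points or a non-smooth boundary globally, but the theorem only requires smoothness near $\Sigma$, and at the first contact radius the boundary of the domain of influence meets $\p\Sigma$ transversally at generic contact points; I would reduce to this local smoothness and handle it via the distance function $\rho(z) = d(z, M(\tau_{r_0}))$ being smooth away from the cut locus. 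A secondary technical point is confirming that $M(\tau_r) = \tilde M(\tau_r)$ for $r \le r_0$ requires only that geodesics realizing the relevant distances stay outside $\Sigma$, which follows because any path from $\p M$ into $\Sigma$ of $\tilde g$-length at most $\tau_r(y)$ would have to cross $\p\Sigma$, contradicting $r \le \hat d(x,\Sigma)$ via the triangle inequality $\hat d(x, \Sigma) \le \hat d(x,y) + d(y, z)$ for $z \in \Sigma$.
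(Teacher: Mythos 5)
Your overall strategy coincides with the paper's: identify $M(\tau_r)$ with $\hat B(x,r)\cap M$ via the triangle inequality, conclude $\vol(\tau_r)=\tilde\vol(\tau_r)$ for $r\le \hat d(x,\Sigma)$ because no admissible path can reach $\Sigma$, and then invoke Theorem \ref{thm_main} at $r_0=\hat d(x,\Sigma)$, using $\tau_{r_0+\epsilon}=\tau_{r_0}+\epsilon$, to break the equality just past $r_0$. This is exactly the structure of the paper's Lemma \ref{smoothness_of_intersection} together with its two-line proof of the theorem.

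The one genuine gap is precisely the step you flag as the main obstacle and then leave unresolved: the smoothness of $M(\tau_{r_0})$ near $\Sigma$. Saying you would ``handle it via the distance function being smooth away from the cut locus'' begs the question, since in an arbitrary complete manifold the sphere $\p\hat B(x,r_0)$ fails to be smooth exactly at cut points of $x$, and you give no reason why the contact points with $\Sigma$ avoid the cut locus. The paper's argument supplies that reason: because $r_0=\hat d(x,\Sigma)$, every point $z_0\in\hat B(x,r_0)\cap\Sigma$ is a \emph{nearest} point to $x$ on $\p\Sigma$, and near a nearest point the function $z\mapsto\hat d(x,z)$ is smooth with non-vanishing gradient (see the proof of \cite[Lem.\ 2.15]{Katchalov2001}); hence $\p\hat B(x,r_0)$ is a smooth level set in a neighborhood of $\Sigma$, which is all Theorem \ref{thm_main} requires. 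Your accompanying remark that the boundary of the domain of influence ``meets $\p\Sigma$ transversally at generic contact points'' is also off on both counts: at the first contact radius the sphere is tangent to $\p\Sigma$ from outside, not transversal to it, and the smoothness hypothesis must be verified at every contact point, not at generic ones. With the nearest-point observation inserted, your argument becomes the paper's proof.
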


\begin{theorem}
\label{thm_unknown_bg}
Suppose that $(M,g)$ is simple.
Let $x \in \p M$, $0 < h < r$ and define
$\tau_{r, h}(x) := r$ and $\tau_{r, h}(y) := h$
for $y \ne x$.
Then $d(x, \Sigma)$ is the supremum of the numbers
$r > 0$  with the following property:
\begin{itemize}
\item[(C)]
There is $h_0 \in (0, r)$ so that 
$\p_r^2\, \tilde \vol(\tau_{r, h})$
exists for almost all $h$ in $[0, h_0]$.
\end{itemize}
\end{theorem}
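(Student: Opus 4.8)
The plan is to convert the statement into a question about the twice-differentiability in $r$ of the $\tilde m$-volume of a growing metric ball, and then to locate the threshold value of $r$ at which this differentiability is destroyed.

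First I would make the geometric reduction. For $0<h<r$ one has
\[
\tilde M(\tau_{r,h}) = \tilde B(x,r)\cup\tilde N(h),
\]
where $\tilde B(x,r):=\{z\in M;\ \tilde d(z,x)\le r\}$ and $\tilde N(h):=\{z\in M;\ \tilde d(z,\p M)\le h\}$ (the union over $y\ne x$ of the balls $\tilde B(y,h)$ differs from $\tilde N(h)$ only by the point $x$). Since $\Sigma\subset M^{int}$, for every $h$ smaller than $\tilde d(\Sigma,\p M)$ the collar $\tilde N(h)$ is disjoint from $\Sigma$, so
\[
\tilde\vol(\tau_{r,h}) = \tilde m(\tilde N(h)) + \tilde m\big(\tilde B(x,r)\setminus\tilde N(h)\big),
\]
and the first summand is independent of $r$; thus (C) concerns only the $r$-regularity of the second summand. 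I would also record $\tilde d(x,\Sigma)=d(x,\Sigma)=:R$: any $\tilde g$-path from $x$ reaching $\Sigma$ has an initial segment in $M\setminus\Sigma$ (where $\tilde g=g$) until it first meets $\p\Sigma$, so its length is at least $d(x,\Sigma)$, while the reverse inequality is trivial. In particular $\tilde B(x,r)=B(x,r)$ and $\tilde m=m$ on it for $r<R$.

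Next, for $r<R$ I would show that (C) holds. Here $\tilde B(x,r)\subset M\setminus\Sigma$, so the distance function is the background one. As $(M,g)$ is simple (Definition \ref{def:simple}), $d(x,\cdot)$ has no conjugate points and no cut locus, hence is smooth on $M^{int}\setminus\{x\}$, and the geodesic sphere $S(x,r):=\{z;\ d(z,x)=r\}$ is a smooth curve. By the coarea formula $\p_r\,\tilde\vol(\tau_{r,h})$ equals the length of $S(x,r)\setminus\tilde N(h)$, and differentiating once more produces the integral of the geodesic curvature over that arc together with endpoint contributions at $S(x,r)\cap\p\tilde N(h)$. These endpoint terms are smooth in $r$ precisely when $S(x,r)$ meets $\p\tilde N(h)$ transversally, which by Sard's theorem fails only for a null set of $h$. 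Hence $\p_r^2\,\tilde\vol(\tau_{r,h})$ exists for almost every $h\in[0,h_0]$, so (C) holds and $\sup\ge R$.

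The heart of the matter is the regime $r\ge R$, where I would show that (C) fails, using the mechanism behind Theorem \ref{thm_main}. Once $r$ exceeds $R$ the ball $\tilde B(x,r)$ penetrates $\Sigma$ through a lens whose width along $\p\Sigma$ is of order $\sqrt{r-R}$; working in the semi-geodesic coordinates and estimates of Theorem \ref{thm_main}, the $\tilde m$-volume contributed by this penetration has leading behavior of order $(r-R)^{3/2}$, with a definite sign (the $c^{-2}$ shrinkage of $\tilde m$ dominating the geometric growth of the faster ball). Equivalently, by Gauss--Bonnet $\p_r^2\,\tilde m(\tilde B(x,r))$ picks up the concentrated Gaussian curvature of $\tilde g$ carried along $\p\Sigma$ inside the ball, i.e. $\p_r\big(\int_{\p\Sigma\cap\tilde B(x,r)}\beta\,d\ell\big)$, which behaves like $(r-R)^{-1/2}$ as $r\to R^+$ because the two points of $\p\Sigma$ at distance $r$ from $x$ separate like $\sqrt{r-R}$. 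This blow-up shows the two-sided second derivative does not exist at $r=R$ for every small $h_0$ (the collar being away from $\Sigma$). To conclude $\sup\le R$ I would then upgrade this to: $\p_r^2\,\tilde\vol(\tau_{r,h})$ fails for every $r\ge R$ and a.e. small $h$; together with the previous paragraph this gives $\sup\{r;\ (C)\}=R=d(x,\Sigma)$.

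The hard part, and the point demanding the most care, is exactly this upgrade — ruling out that $r\mapsto\tilde\vol(\tau_{r,h})$ becomes twice differentiable again just beyond $R$ and thereby enlarges the supremum. The penetration volume is non-smooth at the first contact, but controlling it for $r>R$ forces one to analyze the refracted wavefront inside $\Sigma$: Snell refraction across the non-smooth interface $\p\Sigma$, the corner it creates in the level sets of $\tilde d(x,\cdot)$, and the head waves and caustics that may appear. Making the $(r-R)^{3/2}$ leading term and its sign rigorous (by importing the local computation of Theorem \ref{thm_main}) while simultaneously discarding the exceptional $h$ (tangencies of $S(x,r)$ with $\p\tilde N(h)$) through an almost-everywhere argument is where the main technical effort lies.
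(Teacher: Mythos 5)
Your proposal follows essentially the same route as the paper on both halves that the paper actually proves. For $r<d(x,\Sigma)$ the paper, like you, reduces to the background volume $m(B(r)\cup M(h))$ and shows it is smooth in $r$ for almost every $h$ (Lemma \ref{lem_smoothness_on_simple}); the mechanism is the same as yours --- transversality for a.e.\ $h$ --- except that instead of coarea plus Sard on the geodesic spheres, the paper applies the parametric transversality theorem to the map $F(h,\xi)=\exp_y(hv(y))$, $y=(\tau_M(\xi),\xi)$, and then proves smoothness of all orders by an explicit partition-of-unity and implicit-function computation; this matters because ``the endpoint contributions are smooth when the intersection is transversal'' needs exactly that kind of bookkeeping to become a proof. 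At $r_0=d(x,\Sigma)$ the paper's argument is cleaner than your Gauss--Bonnet heuristic and avoids any analysis of the refracted front inside $\Sigma$: fix an $h$ for which $V(r):=m(M(\tau_{r,h}))$ is smooth near $r_0$, write the symmetric second difference quotient of $\tilde V(r):=\tilde m(\tilde M(\tau_{r,h}))$ as that of $V$ plus $\epsilon^{-2}\bigl(\tilde V(r_0+\epsilon)-V(r_0+\epsilon)\bigr)$, and observe that the first part converges to $V''(r_0)$ while the second diverges by the one-sided $\epsilon^{3/2}$ bound of Theorem \ref{thm_main}; hence $\p_r^2\tilde V$ cannot exist at $r_0$. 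This uses only the first-contact behavior for $r$ slightly above $r_0$, i.e.\ exactly what Theorem \ref{thm_main} supplies, and nothing about Snell refraction, head waves or caustics.

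The step you single out as the hard part --- showing that (C) also fails for every $r>d(x,\Sigma)$ --- is precisely the step the paper does not carry out: its proof stops after establishing (C) for $r_0<d(x,\Sigma)$ and refuting it at $r_0=d(x,\Sigma)$. That suffices only if one reads (C) as a statement about twice differentiability of $r'\mapsto\tilde m(\tilde M(\tau_{r',h}))$ on the whole range $r'\le r$ (so that failure at the threshold propagates to every larger $r$); under a literal pointwise reading of (C) at the single value $r$, your worry is legitimate and is not addressed by the paper either. So you need not analyze the geometry inside $\Sigma$ to match the paper's proof, but you should fix which reading of (C) you are proving and state it, since that choice is what makes the threshold case the last thing to check.
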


By an argument similar with the argument after Theorem \ref{thm_main}, we can choose such $\tau \in C(\p M)$ that
$\tilde M(\tau_{r, h}) = \tilde M(\tau)$.
Thus the volume data (\ref{volume_data}) can be used to determine if property (C) holds.
The claims (i) and (ii) in Theorem \ref{thm_summary} 
are consequences of 
Theorems \ref{thm_unknown_bg} and \ref{thm_known_bg}, respectively.

\begin{lemma}
\label{smoothness_of_intersection}
Let $(\hat M, g)$, $x \in \p M$ and $\tau_r$
be as in Theorem \ref{thm_known_bg}.
Then 
$\hat B(x, r) \cap M = M(\tau_r)$.
Moreover, if 
\begin{equation}
\label{tau_r_intersection}
M(\tau_r) \cap \Sigma \ne \emptyset
\quad \text{and}
\quad 
M(\tau_r) \cap \Sigma^{int} = \emptyset,
\end{equation}
then $M(\tau_r)$ is an embedded smooth manifold with 
boundary near $\Sigma$.
\end{lemma}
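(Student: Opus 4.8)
The statement of Lemma \ref{smoothness_of_intersection} has two parts. The plan is to prove the set identity $\hat B(x,r) \cap M = M(\tau_r)$ first, since it is essentially a restatement of the definitions, and then use it to transfer a smoothness question about $M(\tau_r)$ near $\Sigma$ into a smoothness question about the distance sphere $\{y;\ \hat d(x,y)=r\}$, which is classical geometry. For the first part I would unwind definitions: a point $z \in M$ lies in $M(\tau_r)$ iff there is $y \in \p M$ with $d(z,y) \le \tau_r(y) = r - \hat d(x,y)$, i.e. $d(z,y) + \hat d(x,y) \le r$. The key point is that $M(\tau_r)$ is built from the intrinsic distance $d$ on $(M,g)$ while $\hat B(x,r)$ uses the ambient $\hat d$; the excerpt already warns that these may differ, so I must argue that the cheapest way to reach $z$ and satisfy the constraint is to run directly to $x$.

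\textbf{The set identity.}
For the inclusion $\hat B(x,r)\cap M \subset M(\tau_r)$ I would take $z$ with $\hat d(x,z) < r$ and simply choose $y = x \in \p M$ itself: then the constraint reads $d(z,x) \le r - \hat d(x,x) = r$. One has to be slightly careful because $d(z,x)$ (intrinsic in $M$) could exceed $\hat d(z,x)$; but a length-minimizing $\hat M$-geodesic from $x$ to $z$ may exit $M$. The cleanest route is to note that for the \emph{boundary} point $x$ and any $z$, the relevant estimate is $d(z,x)\le \hat d(z,x)$ is \emph{not} automatic, so instead I would argue by taking an $\hat M$-geodesic $\gamma$ from $x$ realizing $\hat d(x,z)$ and, if it leaves $M$, using points where it re-enters; more robustly, I expect the intended argument uses the triangle inequality $d(z,y)+\hat d(x,y)\ge \hat d(z,y)+\hat d(x,y) \ge \hat d(x,z)$, valid because $d \ge \hat d$ on $M\times M$ when $\hat M \supset M$, giving directly the reverse inclusion $M(\tau_r)\subset \hat B(x,r)$. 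For the forward inclusion the choice $y=x$ works once one observes that along a minimizing ambient geodesic from $x$ into $z$ the constraint is met at the endpoint. I expect the honest statement to be $M(\tau_r) = \hat B(x,r)\cap M$ with the subtlety concentrated entirely in reconciling $d$ and $\hat d$, and the decisive fact being that taking $y=x$ makes $\tau_r(x)=r$ the largest budget.

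\textbf{Smoothness near $\Sigma$.}
Granting the identity, near $\Sigma$ we are in $M\setminus\Sigma^{int}$ by the second hypothesis in \eqref{tau_r_intersection}, so on a neighborhood of $\Sigma$ the metrics $g$ and $\tilde g$ agree and the intrinsic distance $d$ coincides with the ambient $\hat d$ restricted there. Thus $M(\tau_r)$ near $\Sigma$ equals $\{\hat d(x,\cdot)\le r\}$, a sublevel set of the distance function from the fixed boundary point $x$. The plan is to invoke that $\hat d(x,\cdot)$ is smooth away from $x$ and its cut locus, with nonvanishing gradient (a unit vector) at every point where it is smooth, so that $r$ is a regular value and the level set $\{\hat d(x,\cdot)=r\}$ is a smooth embedded hypersurface by the implicit function theorem. \textbf{The main obstacle} is ruling out cut/conjugate points of $x$ on $\p B(x,r)$ exactly where it meets $\Sigma$: I would handle this by using the first condition $M(\tau_r)\cap\Sigma^{int}=\emptyset$ together with $M(\tau_r)\cap\Sigma\neq\emptyset$, so that $\Sigma$ is reached tangentially at $r = \hat d(x,\Sigma)$ precisely at minimizing points, where $\hat d(x,\cdot)$ is smooth with unit gradient. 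Combining regularity of the level set with the smoothness of $\p\Sigma$ then yields that $M(\tau_r)\cap U_\Sigma$ is an embedded smooth manifold with boundary, which is exactly the hypothesis needed to invoke Theorem \ref{thm_main}.
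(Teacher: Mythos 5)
Your reverse inclusion $M(\tau_r)\subset \hat B(x,r)\cap M$ is fine and is exactly the paper's argument: from $d(z,y)\le \tau_r(y)$ and $\hat d\le d$ on $M\times M$ one gets $\hat d(x,z)\le \hat d(x,y)+d(y,z)\le r$. The second half of the lemma is also handled in essentially the paper's way: once the set identity is known, near $\Sigma\subset M^{int}$ the set $M(\tau_r)$ coincides with the sublevel set of $\hat d(x,\cdot)$, and under (\ref{tau_r_intersection}) every point of $M(\tau_r)\cap\Sigma$ is a nearest point of $\p\Sigma$ to $x$, where $\hat d(x,\cdot)$ is smooth with non-vanishing gradient (the paper cites the proof of \cite[Lem.~2.15]{Katchalov2001} for this), so the level set is a smooth embedded hypersurface there. (Your remark that $g$ and $\tilde g$ agree near $\Sigma$ is beside the point: both $M(\tau_r)$ and $\hat B(x,r)$ are defined via $g$ and $\hat d$ only, and $\tilde g$ plays no role in this lemma.)

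The genuine gap is in the forward inclusion $\hat B(x,r)\cap M\subset M(\tau_r)$. You correctly flag the danger that $d(x,z)$ may exceed $\hat d(x,z)$ when the ambient minimizer leaves $M$, and you even mention ``points where it re-enters'', but the argument you finally commit to --- take $y=x$ and use the budget $\tau_r(x)=r$ --- fails precisely in that situation: for a non-convex $M$ (e.g.\ a C-shaped region in the Euclidean plane with $x$ and $z$ at the two tips) one has $\hat d(x,z)<r$ while $d(x,z)>r$, so $y=x$ does not witness $z\in M(\tau_r)$, and ``the constraint is met at the endpoint'' is not a proof. The paper's fix is the one you passed over: let $\gamma:[0,\ell]\to\hat M$ be a shortest ambient path from $x$ to $z\in M^{int}$ and let $\gamma(s)$ be its last crossing of $\p M$, so that $\gamma|_{(s,\ell]}$ stays in $M^{int}$. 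Then
\begin{equation*}
d(\gamma(s),z)\ \le\ \hat l(\gamma|_{[s,\ell]})\ =\ \hat l(\gamma)-\hat l(\gamma|_{[0,s]})\ \le\ r-\hat d(x,\gamma(s))\ =\ \tau_r(\gamma(s)),
\end{equation*}
so the correct witness is $y=\gamma(s)$, not $y=x$ (the case $z\in\p M$ being trivial with $y=z$). Without this step the first claim of the lemma, and hence your reduction of the smoothness statement to a level set of $\hat d(x,\cdot)$, is unproved.
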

\begin{proof}
We have for all $y,z \in M$ that $\hat d(y, z) \le d(y,z)$
since $M \subset \hat M$.
Let $z \in M(\tau_r)$.
Then there is $y \in \p M$ such that
$d(y, z) \le \tau_r(y)$, whence
$\hat d(x,z) \le \hat d(x, y) + d(y, z) \le r$
and $z \in \hat B(x, r)$.

Conversely, let $z \in \hat B(x, r) \cap M$. 
If $z \in \p M$ then 
\begin{equation*}
d(z,z) = 0 \le r - \hat d(x,z) = \tau_r(z),
\end{equation*}
whence $z \in M(\tau_r)$.
Thus we may suppose that $z \in M^{int}$.
Let $\gamma : [0,\ell] \to \hat M$ be a shortest path
from $x$ to $z$.
Then there is $s \in [0, \ell)$ such that
$\gamma(s) \in \p M$ and $\gamma|_{(s,\ell]}$
is a geodesic of $(M^{int}, g)$.
Let us denote the length of $\gamma$ by $\hat l(\gamma)$.
Then 
\begin{align*}
d(\gamma(s), z) 
&\le 
l(\gamma_{[s,\ell]}) 
= 
\hat l(\gamma_{[s,\ell]}) 
= 
\hat l(\gamma) 
- \hat l(\gamma|_{[0,s]}) 
\\&\le 
r - \hat d(x, \gamma(s))
= 
\tau_r(\gamma(s)).
\end{align*}
Hence $z \in M(\tau_r)$.
We have shown that $\hat B(x, r) = M(\tau_r)$.

Let us now assume that (\ref{tau_r_intersection}) holds.
Then $\hat d(x, \Sigma) = r$
and a point $z_0$ in $\hat B(x, r) \cap \Sigma$
is a nearest point to $x$ on the boundary $\p \Sigma$.
Hence there is a neighborhood $U_{z_0} \subset M$ 
of $z_0$ such that for all $z \in U_{z_0}$
\begin{align*}
z \mapsto \hat d(x, z)\ \text{is smooth and}\ 
\nabla_z d(x, z) \ne 0,
\end{align*}
see e.g. the proof of \cite[Lem. 2.15]{Katchalov2001}.
As $\p \hat B(x, r)$ is a level set of $z \mapsto \hat d(x, z)$
we have that it is a smooth embedded manifold near $z_0$.
This is true for all $z_0 \in \hat B(x, r) \cap \Sigma$,
whence $\p \hat B(x, r)$ is a smooth embedded manifold near
$\Sigma$.
\end{proof}

\begin{proof}[Proof of Theorem \ref{thm_known_bg}]
If $r < \hat d(x, \Sigma)$ then 
Lemma \ref{smoothness_of_intersection}
yields that
\begin{align*}
M(\tau_r) = \hat B(x, r) \cap M 
\subset M \setminus \Sigma,
\end{align*}
whence also $\tilde M(\tau_r) = M(\tau_r)$
and $\tilde m = m$ on $\tilde M(\tau_r)$.

If $r = \hat d(x, \Sigma)$
then (\ref{tau_r_intersection}) is satisfied 
and Lemma \ref{smoothness_of_intersection}
and Theorem \ref{thm_main}
yield that for small enough $\epsilon > 0$,
$\tilde \vol(\tau_{r + \epsilon})
< 
\vol(\tau_{r + \epsilon})$.
\end{proof}

\begin{lemma}
\label{lem_smoothness_on_simple}
Let $\tau_{r, h}$ be as in Theorem \ref{thm_unknown_bg}.
Suppose that $(M,g)$ is simple and 
let $r_0 > 0$.
Then there is $h_0 > 0$ such that
$r \mapsto \vol(\tau_{r, h})$
is smooth near $r_0$
for almost all $h \in (0, h_0)$.
\end{lemma}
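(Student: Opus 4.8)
The plan is to reduce the claim to a transversality statement for two distance functions and then apply Sard's theorem. Write $u(z) := d(x,z)$ and $v(z) := d(z, \p M)$. Since $(M,g)$ is simple, $u$ is smooth on $M \setminus \{x\}$ (the exponential map at $x$ is a diffeomorphism, so $u$ has no cut points; cf. the smoothness used in Lemma \ref{smoothness_of_intersection}). Choosing $h_0 > 0$ smaller than the focal distance of $\p M$, the function $v$ is smooth on the collar $\{0 < v < 2 h_0\}$. First I would identify the set: for $y \ne x$ the condition $d(z,y) \le h$ sweeps out the collar, so $M(\tau_{r,h}) = \bar B(x,r) \cup \{v \le h\}$ up to a set of $m$-measure zero (the only points possibly missed are those whose unique nearest boundary point is $x$ at distance exactly $h$, a one-dimensional set). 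Hence $\vol(\tau_{r,h}) = m(\{v \le h\}) + m(\{u \le r\} \cap \{v > h\})$, and since the first term does not depend on $r$, it suffices to show smoothness in $r$ of $F(r,h) := m(\{u \le r\} \cap \{v > h\})$.

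Next I would pass to geodesic polar coordinates $(r, \theta)$ centered at $x$, in which $g = dr^2 + J(r,\theta)^2\, d\theta^2$ with $J$ smooth and positive, so that the length element on the geodesic circle $S_r = \{u = r\}$ is $J\, d\theta$. Because $|\nabla u| = 1$, Fubini in these coordinates gives exactly
\begin{equation*}
\p_r F(r,h) = \ell(r,h) := \int_{\{\theta : v(r,\theta) > h\}} J(r,\theta)\, d\theta,
\end{equation*}
the length of the portion of $S_r$ lying outside the collar $\{v \le h\}$. Thus it is enough to prove that, for almost every $h \in (0, h_0)$, the function $r \mapsto \ell(r,h)$ is smooth near $r_0$; then $\vol(\tau_{\cdot,h})$ is smooth near $r_0$ by one integration. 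The endpoints of the intervals making up $\{\theta : v(r,\theta) > h\}$ are the solutions of $v(r,\theta) = h$, so the only source of non-smoothness in $r$ is a non-transversal crossing, i.e. a point where $v(r,\theta) = h$ and $\p_\theta v(r,\theta) = 0$ simultaneously, which geometrically is a tangency of $S_r$ with the parallel curve $L_h := \{v = h\}$.

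The main step is to show that the set of $h$ for which such a tangency occurs for some $r$ near $r_0$ has measure zero. For fixed $r_0$ these tangency values are exactly the critical values of the single-variable function $\theta \mapsto v(r_0, \theta)$ (note $\p_r u = 1 \ne 0$, so the only way $du$ and $dv$ can be dependent is $\p_\theta v = 0$), and by Sard's theorem this set has measure zero. I would then upgrade transversality at $r_0$ to a full neighborhood by a compactness argument: if no such neighborhood existed there would be $r_n \to r_0$ and $\theta_n \to \theta^*$ with $v(r_n,\theta_n) = h$ and $\p_\theta v(r_n,\theta_n) = 0$, giving in the limit a critical point $\theta^*$ of $v(r_0,\cdot)$ with value $h$, contradicting the choice of $h$. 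For such a good $h$, on the resulting neighborhood of $r_0$ the crossings $\theta_i(r)$ solving $v(r,\theta_i) = h$ are finite in number (locally constant) and depend smoothly on $r$ by the implicit function theorem, so $\ell(r,h) = \sum_j \int_{\theta_{2j-1}(r)}^{\theta_{2j}(r)} J(r,\theta)\, d\theta$ is a finite sum of integrals with smooth limits and smooth integrand, hence smooth in $r$. This yields the claim. The delicate point is exactly that we must keep $r_0$ fixed while varying $h$, which is why Sard is applied to the one-dimensional restriction $v(r_0,\cdot)$ rather than to the planar map $(u,v)$, a Fubini slice of whose critical set would only control almost every $r_0$.
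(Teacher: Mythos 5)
Your argument is correct and follows essentially the same route as the paper: pass to geodesic polar coordinates at $x$ (valid by simplicity), observe that the only obstruction to smoothness in $r$ is a tangency between the geodesic sphere $\p B(x,r)$ and the level curve $\{d(\cdot,\p M)=h\}$, rule this out for almost every $h$ by a genericity argument, and then conclude via the implicit function theorem and finiteness of the transversal crossings. The only cosmetic difference is that you obtain the genericity by applying one-dimensional Sard to $v|_{\p B(x,r_0)}$ and a compactness argument, whereas the paper invokes the parametric transversality theorem for the family $F_h(\xi)=\exp_y(hv(y))$; these yield the same condition, and your direct formula for $\p_r F$ as a finite sum of integrals over intervals with smoothly varying endpoints replaces the paper's inductive differentiation of $V(r)=\int_{S_j(r)}b(r,\alpha)\,d\alpha$.
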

\begin{proof}
We denote
\begin{align*}
B(r) := \{z \in M;\ d(z, x) \le r\}, 
\quad
M(h) := \{z \in M;\ d(z, \p M) \le h\},
\end{align*}
where $x \in \p M$ is as in Theorem \ref{thm_unknown_bg}.
Moreover, we denote by $v(y) \in T_y(M)$
the interior unit normal vector of $\p M$ at $y$
and define
\begin{align*}
&\p_+ S_x(M) 
:= 
\{ \xi \in T_x(M);\ 
|\xi|_g = 1,\ (\xi, v(x))_g > 0 \},
\\
&\tau_M(\xi) 
:= 
\max \{t > 0;\ \exp_x(t \xi) \in M \},
\quad \xi \in \p_+ S_x(M).
\end{align*}
As $(M,g)$ is simple we have that $\tau_M$ is smooth and
the map
\begin{equation}
\label{normal_coords_at_p}
(t, \xi) \mapsto \exp_x(t\xi),
\quad 
0 < t \le \tau_M(\xi),\ 
\xi \in \p_+ S_x(M),
\end{equation}
gives coordinates in $M \setminus \{x\}$.
Moreover, for $y = (\tau_M(\xi), \xi)$ in $\p M \setminus \{x\}$
we have $(v(y), \p_t \exp_x(\tau_M(\xi) \xi))_g < 0$.
Thus 
\begin{align*}
\p_t d(\exp_x(t \xi), \p M)|_{t=\tau_M(\xi)}
&= 
(\text{grad}_y d(y, \p M), \p_t \exp_x(\tau_M(\xi) \xi))_g
\\&= 
(v(y), \p_t \exp_x(\tau_M(\xi) \xi))_g
\ne 0.
\end{align*}

We define 
$S 
:= 
\{ \xi \in \p_+ S_x(M);\ 
\tau_M(\xi) \ge r_0/2\}$.
By the implicit function theorem and compactness 
of $S$
there are $h_0 > 0$, a finite number of points 
$\xi_j \in S$, $j=1, \dots N$,
neighborhoods
\begin{align*}
\xi_j \in U_j \subset S,
\quad 
\tau(\xi_j) \in I_j \subset \R
\end{align*}
and smooth functions $\sigma_j$ such that
for all $\xi \in U_j$, $t \in I_j$ and $h \in [0, h_0)$
\begin{align*}
d(\exp_x(t\xi), \p M) = h
\quad \text{if and only if} \quad
t = \sigma_j(\xi, h),
\end{align*}
and that the sets $I_j \times U_j$ cover the points 
$(\tau_M(\xi), \xi)$, $\xi \in S$.
Furthermore, we may require that $h_0 < r_0/2$ and that 
the boundary normal coordinates are well-defined on  $M(h_0)$.

Let us define for $h \in (0, h_0)$ and 
$\xi \in \p_+ S_x(M)$,
\begin{align*}
F(h, \xi) := \exp_y(h v(y))|_
{y = (\tau_M(\xi), \xi)}.
\end{align*}
Then $F$ gives coordinates in 
$M(h_0) \setminus B(r_0/2)$ and 
the range of $dF$ at 
$z$ in $\p B(r_0) \cap M(h_0)^{int}$ 
is the whole tangent space $T_z(M)$.
Thus the transversality theorem, see e.g. \cite[Thm. 3.2.7]{Hirsch1994}, yields that
$F_h(\xi) := F(h, \xi)$ is transversal to $\p B(r_0) \cap M(h_0)^{int}$ for almost all $h \in (0, h_0)$.

Let $h \in (0, h_0)$ be such that 
$F_h$ is transversal to $\p B(r_0) \cap M(h_0)^{int}$.
To simplify the notation we denote
$\sigma_j(\xi) := \sigma_j(\xi, h)$
and $\sigma_j'(\xi) := \p_\xi \sigma_j(\xi)$.
Using the normal coordinates (\ref{normal_coords_at_p})
we may identify $T_{(t, \xi)}(M) = \linspan\{\p_t, \p_\xi\}$. 
The subspace $T_{(t, \xi)}(\p B(r_0))$ is then spanned by $\p_\xi$.
By transversality the vector $dF_h \p_\xi$ has nonzero $\p_t$ component 
at the points $(r_0, \xi) \in \p B(r_0) \cap M(h_0)^{int}$
satisfying $d((r_0, \xi), \p M) = h$.
Moreover, for $(t, \xi) \in U_j \times I_j$
the $\p_t$ component is 
\begin{align*}
dt(d F_h \p_\xi)
= 
\sigma_j'(\xi).
\end{align*}
Hence $\sigma_j'(\xi) \ne 0$
for all $\xi \in S$ satisfying 
$\sigma_j(\xi) = r_0$.

Let $(\phi_j(\xi))_{j = 1}^N$,
be a partition of unity in $S$
subordinate to the cover $(U_j)_{j=1}^N$.
The solutions of 
$\sigma_j(\xi) = r_0$, $\xi \in \supp(\phi_j)$,
can not have an accumulation point $\xi_\infty$
since otherwise we would have 
a contradiction $\sigma_j'(\xi_\infty) = 0$.
Thus the number of solutions is finite, and we may choose a finer partition of unity $(\psi_j)_{j=1}^M$
such that for any $j = 1, \dots, M$
there is at most one solution 
$\sigma_j(\xi) = r_0$, $\xi \in \supp(\psi_j)$.
Moreover, we choose $(\psi_j)_{j=1}^M$ so that $\sigma_j(\xi) = r_0$ implies 
$\xi \in \supp(\psi_j)^{int}$.
Here we have reindexed the functions $\sigma_j$, 
allowing $\sigma_j = \sigma_k$ for $j \ne k$,
so that $\sigma_j$ is defined in the support of $\psi_j$.

Let $r > r_0/2$. If
$(t, \xi) \in M \setminus B(r)$
then $\xi \in S$.
Moreover,
\begin{align*}
(t, \xi) \in M \setminus (B(r) \cup M(h))
\end{align*}
if and only if there is $j = 1, \dots, M$
such that $\xi \in \supp(\psi_j)$ and
$r < t < \sigma_j(\xi)$.
We define 
$S_j(r) 
:= 
\{\xi \in \supp(\psi_j);\ r < \sigma_j(\xi) \}$.
Then  
\begin{align}
\nonumber
&m(B(r) \cup M(h))
= m(M) - m(M \setminus (B(r) \cup M(h)))
\\&\quad=
\label{partition_of_complement_vol}
m(M) - 
\sum_{j=1}^M \int_{\mathcal S_j(r)} \psi(\xi) 
\int_r^{\sigma_j(\xi)} |g(s, \xi)|^{1/2} ds d\xi.
\end{align}
As $\sigma_j'(\xi)$ does not vanish in the 
solution set $\sigma_j(\xi) = r_0$
we get easily that $S_j(r_0) = \emptyset$
implies $S_j(r) = \emptyset$ 
for $r$ near $r_0$.
Moreover, $S_j(r_0) = \supp(\psi_j)$
implies $S_j(r) = \supp(\psi_j)$
for $r$ near $r_0$.
We see that the terms in the sum 
(\ref{partition_of_complement_vol})
corresponding these two cases are smooth.

Let us choose coordinates
$(-1,1) \ni \alpha \mapsto \xi \in \p_+ S_x(M)$
and let us consider such $j=1, \dots, M$
that there is a solution 
$\alpha_0 \in \supp(\psi_j)^{int}$ of
$\sigma_j(\alpha) = r_0$.
By the inverse function theorem 
$\sigma_j^{-1}$ is a smooth function near $r_0$.
Let us denote $\rho := \sigma_j^{-1}$ and 
define for small $\epsilon > 0$,
\begin{align*}
U := \{(r, \alpha) \in 
(r_0 - \epsilon, r_0 + \epsilon) \times (-1, 1);\ 
\rho(r) < \alpha \}.
\end{align*}
The set $U$ is open.
Let us assume for a moment that $\rho'(r_0) > 0$.
Then $(r, \alpha) \in U$
implies that $r < \sigma_j(\alpha)$ and the function 
\begin{align*}
(r, \alpha) \mapsto \psi_j(\alpha) 
\int_r^{\sigma_j(\alpha)} |g(s, \alpha)|^{1/2} ds
\end{align*}
is smooth on $U$.

Let us show for a smooth function $b$ that
\begin{align*}
V(r) := \int_{\mathcal S_j(r)} 
b(r, \alpha) d\alpha
\end{align*}
is smooth near $r_0$.
We have for small $t > 0$ and $r$ near $r_0$ that
\begin{align*}
\mathcal S_j(r) \setminus S_j(r + t) 
&= 
\{\alpha \in \supp(\psi_j);\ 
r < \sigma_j(\alpha) \le r + t \}
\\&= 
(\rho(r), \rho(r + t)],
\end{align*}
whence
\begin{align*}
&V(r) - V(r + t)
= 
\int_{S_j(r)} b(r, \alpha) d\alpha
-
\int_{S_j(r + t)} b(r + t, \alpha) d\alpha
\\&\quad= 
\int_{S_j(r)} b(r, \alpha) d\alpha
-
\int_{S_j(r + t)} b(r, \alpha) d\alpha
- t \int_{S_j(r + t)} \p_r b(r, \alpha) d\alpha
+ \O(t^2)
\\&\quad= 
b(r, \rho(r))(\rho(r + t) - \rho(r))
+ \int_{\rho(r)}^{\rho(r + t)} \O(t) d\alpha
\\&\quad\quad
- t \int_{S_j(r + t)} \p_r b(r, \alpha) d\alpha
+ \O(t^2).
\end{align*}
The case $t < 0$ is analogous, whence for 
$r$ near $r_0$,
\begin{align*}
V'(r) = 
- b(r, \rho(r)) \rho'(r) 
+ \int_{S_j(r)} \p_r b(r, \alpha) d\alpha.
\end{align*}
Clearly, the first term is a smooth function of $r$ near $r_0$. The second term is differentiable by the 
above argument. Hence $V$ is smooth near $r_0$ by induction.

The case $\rho'(r_0) < 0$ is analogous and we see that 
$r \mapsto m(B(r) \cup M(h))$ is smooth near $r_0$.
\end{proof}

\begin{proof}[Proof of Theorem \ref{thm_unknown_bg}]
Let $r_0 > 0$ satisfy $r_0 < d(x, \Sigma)$.
Then for $h < d(\Sigma, \p M)$ and $r$ near $r_0$,
we have $\vol(\tau_{r,h}) = \tilde \vol(\tau_{r,h})$.
Hence condition (C) holds by 
Lemma \ref{lem_smoothness_on_simple}.

Let $r_0 = d(x, \Sigma)$ and $0 < h < d(\Sigma, \p M)$.
Then (\ref{boundary_intersection}) is satisfied
for $\Gamma = \p M$ and 
$\tau = \tau_{r_0, h}$.
Moreover, using the coordinates 
(\ref{normal_coords_at_p})
we see that $M(\tau)$ is
an embedded smooth manifold with boundary
near $\Sigma$.
By Lemma \ref{lem_smoothness_on_simple}
there is $0 < h_0 < d(\Sigma, \p M)$ such that
$r \mapsto \vol(\tau_{r, h})$
is smooth near $r_0$ for almost all $h \in [0, h_0]$. 
Let us fix such $h \in [0, h_0]$ and 
denote 
$V(r) := \vol(\tau_{r, h})$
and $\tilde V(r) := \tilde \vol(\tau_{r, h})$.
Then 
\begin{align*}
&\frac
{\tilde V(r_0 + \epsilon) - 2 \tilde V(r_0) 
+ \tilde V(r_0 - \epsilon)}{\epsilon^{2}}
\\&\quad=
\frac
{\tilde V(r_0 + \epsilon) - V(r_0 + \epsilon)}
{\epsilon^{2}}
+ \frac
{V(r_0 + \epsilon) - 2 V(r_0) + V(r_0 - \epsilon)}
{\epsilon^{2}}. 
\end{align*}
The second term converges to $V''(r_0)$
and the first term diverges by Theorem \ref{thm_main}.
Hence $\tilde V''(r_0)$ does not exist, and
we see that (C) does not hold. 
\end{proof}

\section{The direct problem}
\label{sec_direct_problem}

In this section we study the regularity of the solution 
$u^f$ of the wave equation (\ref{eq:wave}).
In particular, we see by combining results  
\cite{Lasiecka1991} with results \cite{Lions1972},
that the Neumann-to-Dirichlet operator  
$\Lambda_{2T}$ is well defined on $L^2((0, 2T) \times \p M)$.

We define a quadratic form by
\begin{equation*}
Q_{\tilde g}(u, v) := \int_M (du, dv)_{\tilde g} d\tilde m,
\quad u, v \in H^1(M),
\end{equation*}
where $du$ denotes the exterior derivative of $u$ and
$(\cdot, \cdot)_{\tilde g}$ is the inner product 
on the contangent bundle given by $\tilde g$.
Then 
\begin{equation*}
\Delta_{\tilde g} : H^1(M) \to (H^1(M))^*
\end{equation*}
with homogeneous Neumann boundary conditions is defined by
\begin{equation*}
(\Delta_{\tilde g} u, v)_{L^2(M; \tilde m)} = -Q_{\tilde g}(u, v),
\quad u, v \in H^1(M).
\end{equation*}
By \cite[Thm 9.5]{Lions1972} the solution of the problem 
\begin{align}
\label{variational_wave_eq}
\p_t^2 u - \Delta_{\tilde g} u = F \in L^2(0, 2T; (H^{1-\theta}(M))^*),  
\quad u_{t < 0} = 0,
\end{align}
satisfies $u \in C([0, 2T]; H^\theta(M))$, 
where $\theta \in (0, 1)$ and $u_{t < 0} = 0$
stands for $u|_{t = 0} = \p_t u|_{t = 0} = 0$.
Let $\theta \in (0, 1/2)$,
$f \in L^2((0, T) \times \p M)$ and define 
\begin{align}
\label{def_F}
\pair{F(t), v} 
:= 
\int_{\p M} f(t, x) v|_{\p M}(x) dS_g,
\quad v \in H^{1-\theta}(M),
\end{align}
where $dS_g$ is the Riemannian volume measure of 
$(\p M, g|_{\p M})$.
Then $F$ is in the space $L^2(0, 2T; (H^{1-\theta}(M))^*)$
and the solution $u = u^f$ of (\ref{variational_wave_eq})
solves the equation (\ref{eq:wave}) in variational sense, see equation (9.21a) in \cite[p. 288]{Lions1972}.
In particular, the map 
$f \mapsto u^f(T)$ is a compact operator
\begin{align*}
L^2((0, T) \times \p M) \to L^2(M).
\end{align*}

The Laplace-Beltrami operator $\Delta_g$ of $(M, g)$
has smooth coefficients, and
by \cite{Lasiecka1991} the solution of the Neumann problem 
\begin{align*}
&\p_t^2 v - \Delta_g v = 0,
\quad v_{t < 0} = 0,
\quad \p_\nu v = f \in L^2((0, 2T) \times \p M)
\end{align*}
satisfies $v \in C([0, 2T]; H^\alpha(M))$
for any $\alpha < 3/5$.
Let us choose a cutoff function $\chi \in C^\infty(M)$
such that $\chi = 1$ in a neighborhood of $\p M$
and that $g = \tilde g$
in the support of $\chi$.
Let us define $w := u^f - \chi v$. Then $w$ 
satisfies (\ref{variational_wave_eq}) with 
\begin{align*}
F 
&= 
(\p_t^2 - \Delta_{\tilde g}) (-\chi v)
= 
-\chi (\p_t^2  - \Delta_g)v 
-[\Delta_g, \chi]v
= 
[\chi, \Delta_g] v.
\end{align*}
Note that the commutator $[\chi, \Delta_g]$
is a differential operator of order one. 
Moreover, as $\chi = 1$ in a neighborhood of $\p M$
we have that $[\chi, \Delta_g] = 0$
in the same neighborhood. 
Thus there are $\tilde \chi \in C_0^\infty(M)$
and $C > 0$ such that 
\def\E{\mathcal E}
\begin{align*}
&\pair{[\chi, \Delta_g]v, \phi}_{\E'(M) \times C^\infty(M)}
= 
\pair{[\chi, \Delta_g]v, \tilde \chi \phi}_{\D'(M) \times C_0^\infty(M)}
\\&\quad= 
\pair{v, [\chi, \Delta_g]^* \tilde \chi \phi}
\le
\norm{v}_{H^\alpha(M)} 
\norm{[\chi, \Delta_g]^* \tilde \chi \phi}_{H^{-\alpha}(M)}
\\&\quad\le C
\norm{v}_{H^\alpha(M)} 
\norm{\phi}_{H^{1-\alpha}(M)}.
\end{align*}
Hence 
$F = [\chi,\Delta_g] v \in C([0, 2T]; (H^{1-\alpha}(M))^*)$ 
and $w \in C([0, 2T]; H^\alpha(M))$.
Choosing $\alpha \in (1/2, 3/5)$ we see that
$\Lambda_{2T}$ is a compact operator
\begin{align*}
L^2((0, T) \times \p M) \to L^2((0, T) \times \p M).
\end{align*}

Note that, if $f \in C_0^\infty((0,2T) \times M)$
then $v$ and $F$ are smooth and by 
\cite[Thm 8.2]{Lions1972}
\begin{align}
\label{natural_smoothness}
&u^f \in C([0,2T]; H^1(M)), 
\quad
\p_t u^f \in C([0,2T]; L^2(M)), 
\\\nonumber
&\p_t^2 u^f,\ \Delta_{\tilde g} u^f \in L^2(0,2T; (H^1(M))^*).
\end{align}

\section{Computation of the volumes}
\label{sec_minimization}

In this section we show that the minimization 
algorithm of \cite{Oksanen2011}
works also with a piecewise smooth metric tensor.
We denote
\begin{align}
\label{def_R_J}
&R f(t) := f(2 T - t), 
\quad 
J f(t) := \frac{1}{2} \int_0^{2 T} 1_L(t,s) f(s) ds,
\\\nonumber
&K := J \Lambda_{2T} - R \Lambda_{2T} R J,
\quad
I f (t) := 1_{(0,T)}(t) \int_0^t f(s) ds,
\end{align}
where $L := \{ (t,s) \in \R^2; t + s \le 2 T,\ s > t > 0 \}$.
Moreover, we denote by 
$I^+$ the adjoint of $I$ in 
\begin{equation*}
\mathcal S := L^2((0, 2T) \times \p M; dt \otimes dS_g),
\end{equation*}
where $dS_g$ is the Riemannian volume measure of the manifold $(\p M, g|_{\p M})$.

Let us denote by $(\cdot, \cdot)$ and $\norm{\cdot}$ 
the inner product and the norm of $\mathcal S$.
For $\tau \in C(\p M)$
we define $\mathcal S(\tau)$
to be the set of $f \in \mathcal S$ satisfying
\begin{equation}
\label{eq:source_supp_condition}
\supp(f) \subset \{ (t, y) \in [0, T] \times \p M;\ t \in [T - \tau(y), T] \}.
\end{equation}
We claim that the regularized minimization problem,
\begin{equation}
\label{minimization_regularized}
\argmin_{f \in \mathcal S(\tau)} \ll ((f, K f) - 2(I f, 1) + \alpha \norm{f}^2 \rr), 
\quad \alpha > 0,
\end{equation}
has unique solution $f_\alpha$, $\alpha > 0$, 
that is also the solution of the linear equation, 
\begin{equation}
\label{the_linear_control_eq}
(P_\tau K P_\tau + \alpha) f = P_\tau I^+ 1,
\quad \alpha > 0,
\end{equation}
where $P_\tau$ is the orthogonal projection from $\mathcal S$ to $\mathcal S(\tau)$.
The proof is based on the following two reformulations
of the Blagovestchenskii identity,
\begin{align}
\label{inner_products}
(u^f(T), u^h(T))_{L^2(M; d\tilde m)} 
&= (f, K h),
\\\label{inner_product_with_1}
(u^f(T), 1)_{L^2(M; d\tilde m)} &= (I f, 1),
\end{align}
where $u^f$ and $u^h$ are the solutions of (\ref{eq:wave})
corresponding to the boundary sources $f, h \in L^2((0, 2T) \times \p M)$, respectively. For the original formulation of the identity see \cite{Blagovevsvcenskiui1966}.
The equations (\ref{inner_products})
and (\ref{inner_product_with_1}) together with 
compactness of $f \mapsto u^f(T)$ imply
the existence and uniqueness for the minimization problem 
(\ref{minimization_regularized}) 
by using exactly the same argument as in \cite{Oksanen2011}.

Let us verify next the equations (\ref{inner_products}) and 
(\ref{inner_product_with_1}).
By continuity of the map $f \mapsto u^f(T)$ and of the operators $\Lambda_{2T}, J, R$ and $I$
it is enough to show (\ref{inner_products})
and (\ref{inner_product_with_1})
for $f$ and $h$ in $C_0^\infty((0, 2T) \times \p M)$.
Then $u^f$ and $u^h$ have the regularity properties (\ref{natural_smoothness}).
We have 
\begin{align*}
&(\p_t^2 - \p_s^2) (u^f(t), u^h(s))_{L^2(M; d\tilde m)}
\\&\quad=
\pair{\p_t^2 u^f(t), u^h(s)}_{(H^1(M))^* \times H^1(M)}
- \pair{u^f(t), \p_s^2 u^h(s)}_{H^1(M) \times (H^1(M))^*}
\\&\quad=
- \tilde q(u^f(t), u^h(s))
+ \int_{\p M} f(t, x) u^h|_{\p M}(s, x) dS_g
\\&\quad\quad\quad
+ \tilde q(u^h(s), u^f(t))
- \int_{\p M} h(s, x) u^f|_{\p M}(t, x) dS_g
\\&\quad=
\int_{\p M} 
\ll(f(t, x) \Lambda_{2T} h(s, x) - h(s, x) \Lambda_{2T} f(t, x) \rr) dS_g,
\end{align*}
whence we obtain (\ref{inner_products}) by solving a one dimensional wave equation 
with vanishing initial and boundary conditions.
Note that, the right hand side of this wave equation is in 
$L^2((0, 2T) \times (0, 2T))$, whence the solution is
in $C([0,2T]; H^1(0, 2T))$ and $w(T, T)$ is well 
defined. This follows also from (\ref{natural_smoothness}).
Moreover, 
\begin{align*}
&\p_t^2 (u^f(t), 1)_{L^2(M; d\tilde m)}
=
\pair{\p_t^2 u^f(t), 1}_{(H^1(M))^* \times H^1(M)}
\\&\quad=
- \tilde q(u^f(t), 1)
+ \int_{\p M} f(t, x) dS_g
=
\int_{\p M} f(t, x) dS_g,
\end{align*}
and we obtain (\ref{inner_product_with_1}) by solving
an ordinary differential equation with vanishing initial conditions. 


\begin{figure}[t]
\centering
\includegraphics[width=0.9\textwidth]{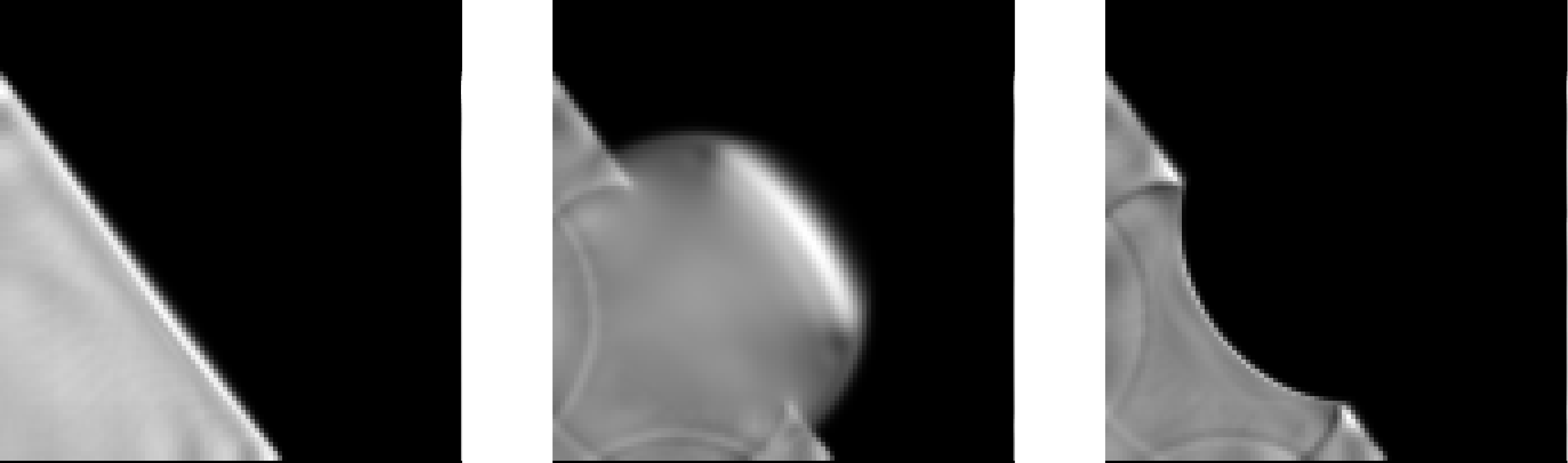}
\caption{
Waves $u^f(T)$ approximately
one on $\tilde M(\tau)$.
The boundary source $f$ has been 
obtained by solving a discretized version of (\ref{the_linear_control_eq})
with the conjugate gradient method.
Here $g$ is the Euclidean metric tensor and
we have chosen $\tau \in C(\p M)$ so that
$M(\tau)$ is a half plane.
{\em On left}, there is no inclusion. 
{\em In middle}, $\Sigma$ is a disk and $c(x) = 5$ for
$x \in \Sigma$. {\em On right}, 
$\Sigma$ is the same disk as in the middle
but $c(x) = 1/5$ for $x \in \Sigma$.
}
\label{fig_indicator_wave_fields}
\end{figure}

We claim that for all $\tau \in C_T(\p M)$,
\begin{equation}
\label{convergence_to_domi_indicator}
\lim_{\alpha \to 0} u^{f_\alpha}(T) =
1_{\tilde M(\tau)}
\quad 
\text{in $L^2(M)$},
\end{equation}
where $f_\alpha$, $\alpha > 0$, are the solutions of
(\ref{the_linear_control_eq}).
See Figure \ref{fig_indicator_wave_fields} for examples of 
computational such solutions of (\ref{the_linear_control_eq})
that $u^{f_\alpha}(T)$ approximates $1_{\tilde M(\tau)}$.
Note that $\Lambda_{2T}$ determines the volume data (\ref{volume_data}) by (\ref{convergence_to_domi_indicator})
and (\ref{inner_products}),
whence we have proved Theorem \ref{thm_summary} after proving 
(\ref{convergence_to_domi_indicator}).
Moreover, the convergence (\ref{convergence_to_domi_indicator}) 
can be proved exactly in the same way as the corresponding result in \cite{Oksanen2011}
given the following approximate controllability result.

\begin{lemma}
\label{lem_density}
Let $\Gamma \subset \p M$ be open, $T > 0$
and define $\tau(y) := T 1_\Gamma(y)$, $y \in \p M$. 
Then the embedding 
\begin{align*}
\{ u^f(T);\ f \in C_0^\infty((0,T) \times \Gamma) \}
\subset
L^2(\tilde M(\tau))
\end{align*}
is dense. 
\end{lemma}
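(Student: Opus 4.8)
The plan is to establish the density (approximate controllability) statement in Lemma \ref{lem_density} by invoking the complementary hyperbolic unique continuation principle of Tataru, which is precisely the tool singled out in the introduction as the backbone of the method. The key point is that the wave equation (\ref{eq:wave}) with the non-smooth metric $\tilde g$ still has finite speed of propagation with respect to $\tilde d$, so $\supp(u^f(t)) \subset \tilde M(\tau_t)$ where $\tau_t(y) = (t-(T-\tau(y)))_+ $; in particular $u^f(T)$ is supported in $\tilde M(\tau)$ for $f \in C_0^\infty((0,T)\times\Gamma)$, which gives the inclusion in the statement. The content of the lemma is the \emph{density} of this range.

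First I would reduce the density claim to a unique continuation argument by duality. Suppose $w \in L^2(\tilde M(\tau))$ is orthogonal to every $u^f(T)$ with $f \in C_0^\infty((0,T)\times\Gamma)$; the goal is to show $w=0$. The standard approach is to introduce the adjoint (final-value) problem: let $\phi$ solve the wave equation $\p_t^2 \phi - \Delta_{\tilde g}\phi = 0$ on $(0,T)\times M$ with homogeneous Neumann data on $\p M$ and terminal conditions $\phi(T)=0$, $\p_t\phi(T) = w$ (extended by zero outside $\tilde M(\tau)$). Integrating by parts in space-time against $u^f$ and using the homogeneous initial data of $u^f$ together with the Neumann condition $\p_\nu u^f = f$, the orthogonality $(u^f(T),w)=0$ for all admissible $f$ translates into $\phi|_{(0,T)\times\Gamma} = 0$. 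Since $\phi$ already satisfies $\p_\nu\phi = 0$ on all of $\p M$, this forces both the Dirichlet and Neumann traces of $\phi$ to vanish on $(0,T)\times\Gamma$.

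The heart of the matter is then Tataru's unique continuation theorem: vanishing Cauchy data on $(0,T)\times\Gamma$ for a solution of a wave equation with (here piecewise smooth, real-analytic-in-time) coefficients forces $\phi$ to vanish in the double cone of influence, namely the set $\{(t,x) : \tilde d(x,\Gamma) < \min(t, T-t)\}$ or more precisely the region reachable by the finite-speed domain of dependence from $\Gamma$ within the available time. Choosing $T$ large relative to $\tau = T 1_\Gamma$, this region contains $\{T\}\times \tilde M(\tau)$, so $\p_t\phi(T) = w$ vanishes on $\tilde M(\tau)$, i.e. $w=0$. The main technical obstacle I anticipate is justifying that Tataru's principle applies across the interface $\p\Sigma$ where the metric $\tilde g$ is only piecewise smooth: one must either appeal to a version valid for coefficients that are merely Lipschitz/piecewise smooth but real-analytic in the time variable (which is exactly the regularity here, since $\tilde g$ is $t$-independent), or handle the transmission conditions at $\p\Sigma$ directly. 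I expect the cleanest route is to note that $\tilde g$ is analytic in $t$ and piecewise smooth in $x$, which is within the scope of Tataru's theorem \cite{Tataru1995}; the geometric bookkeeping relating the unique continuation region to $\tilde M(\tau)$ via the distance function $\tilde d$ is then routine, mirroring the smooth case in \cite{Oksanen2011}.
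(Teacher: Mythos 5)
Your overall skeleton (reduce density to a unique continuation statement by duality, then propagate vanishing Cauchy data from $(0,T)\times\Gamma$ into the double cone determined by $\tilde d$) is the right one, and it is essentially what the paper does via Lemma \ref{lem_uniq_cont} and the reference to \cite[Thm.\ 3.10]{Katchalov2001}. However, there is a genuine gap at the step you yourself flag as "the main technical obstacle" and then dismiss: you claim that because $\tilde g$ is $t$-independent and piecewise smooth in $x$, the equation is "within the scope of Tataru's theorem." It is not. The metric $\tilde g$ in (\ref{g_tilde}) has a jump discontinuity across $\p\Sigma$ (the principal coefficients are multiplied by $c^{-2}>0$ with $c>1$ inside $\Sigma$), and Tataru's unique continuation theorem \cite{Tataru1995} — even in its low-regularity refinements — requires at least continuity (in fact Lipschitz/$C^1$ control) of the principal part in the spatial variables. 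Analyticity in $t$ does not rescue a spatial jump. So you cannot apply the theorem in one stroke across $\p\Sigma$, and the conclusion "$\phi$ vanishes in the double cone of influence" does not follow from the cited result.

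This is precisely why the paper devotes its appendix to two auxiliary lemmas that your sketch does not supply. First, Lemma \ref{lem_uniq_cont_through_discontinuity} is a transmission argument: vanishing Cauchy data on one side of $\p\Sigma$ gives, by unique continuation for the \emph{smooth} metric inside $\Sigma$, vanishing of $u$ in a one-sided collar of the interface; since $u\equiv 0$ there, $u$ then solves the smooth equation $\p_t^2 u-\Delta_g u=0$ on a full two-sided neighborhood, and a second application of unique continuation pushes the vanishing across $\p\Sigma$. Second, in Lemma \ref{lem_uniq_cont} one propagates the vanishing along a shortest $\tilde g$-path from $\Gamma$ to the target point, alternating between unique continuation in the smooth pieces and the transmission lemma at each crossing of $\p\Sigma$; this requires the nontrivial geometric fact that a shortest path meets $\p\Sigma$ in only finitely many arcs, which the paper proves by a length-comparison argument using the semi-geodesic coordinates (\ref{semi_geodesic_coordinates}) and $c>1$. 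Without these two ingredients (or an equivalent treatment of the interface), your argument does not close. A secondary, fixable imprecision: since $\tau=T1_\Gamma$ exactly, you cannot "choose $T$ large relative to $\tau$"; one needs the sharp cone $\tilde d(x,\Gamma)+|t-T|<T$, obtained by the standard odd reflection of $\phi$ about $t=T$ (this is the $(-T,T)$ normalization in Lemma \ref{lem_uniq_cont}), not a cone of the form $\tilde d(x,\Gamma)<\min(t,T-t)$, which is empty at $t=T$.
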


This result is well-known
when the metric tensor is smooth, see e.g. \cite{Katchalov2001}.
We outline the proof in the appendix below.

\appendix
\section{Unique continuation}

In this appendix we employ the unique continuation result
by Tataru \cite{Tataru1995} to prove Lemma \ref{lem_density}.
We begin by proving two other lemmas.

\begin{lemma}
\label{lem_uniq_cont_through_discontinuity}
Let $T > 0$, $U \subset M^{int}$ be open
and define $\Gamma := U \cap \p \Sigma$.
Let 
\begin{align*}
u \in C([-T,T]; H^1(U)) \cap C^1([-T,T]; L^2(U))
\end{align*}
be a solution of 
$\p_t^2 u - \Delta_{\tilde g} u = 0$.
Then the traces $u|_{(-T, T) \times \Gamma}^+$
and $\p_\nu u|_{(-T, T) \times \Gamma}^+$
of $u|_{(-T, T) \times U \cap \Sigma}$
vanish if and only if the 
traces 
$u|_{(-T, T) \times \Gamma}^-$
and $\p_\nu u|_{(-T, T) \times \Gamma}^-$
of $u|_{(-T, T) \times U \setminus \Sigma}$
vanish.
\end{lemma}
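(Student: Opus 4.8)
The plan is to reduce the lemma to a pair of \emph{transmission conditions} across $\Gamma$, after which the stated equivalence is immediate. Writing $U_+ := U \cap \Sigma^{int}$ and $U_- := U \setminus \Sigma$, and letting $\nu$ be the $g$-unit conormal of $\Gamma$ pointing from $U_-$ into $U_+$, I claim that the global regularity $u \in C([-T,T];H^1(U))$ together with the equation $\p_t^2 u - \Delta_{\tilde g} u = 0$ forces
$$ u|_\Gamma^+ = u|_\Gamma^- \quad\text{and}\quad \p_\nu u|_\Gamma^+ = \p_\nu u|_\Gamma^- $$
in the sense of distributions on $(-T,T)\times\Gamma$. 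Granting this, if the two $+$ traces vanish then the two $-$ traces equal them and hence also vanish, and conversely; this is exactly the assertion of the lemma.

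The Dirichlet condition $u|_\Gamma^+ = u|_\Gamma^-$ is the easy half. For each fixed $t$ we have $u(t) \in H^1(U)$, and the interior trace of an $H^1$ function onto the hypersurface $\Gamma$ is single valued: the traces of $u(t)|_{U_+}$ and of $u(t)|_{U_-}$ onto $\Gamma$ coincide in $H^{1/2}(\Gamma)$. Integrating against test functions in $t$ gives equality of the space-time traces. No use of the equation is needed here beyond the global $H^1$ membership.

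The Neumann condition is the crux, and it is where the two-dimensionality enters. I would work entirely with the space-time weak formulation, testing the equation against $\phi \in C_0^\infty((-T,T)\times U)$: by the definition of $\Delta_{\tilde g}$ through the form $Q_{\tilde g}$ and integration by parts in $t$,
$$ \int_{-T}^T\!\!\int_U \big(\p_t u\,\p_t\phi - (du,d\phi)_{\tilde g}\big)\, d\tilde m\, dt = 0. $$
Splitting the spatial integral into $U_+$ and $U_-$ and integrating by parts in space on each piece produces a surface term on $\Gamma$; since the left-hand side vanishes for every $\phi$ and the bulk terms cancel because the equation holds on each smooth piece, the surface term must vanish, which yields the desired jump relation. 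The key simplification is that on $U_+$, where $\tilde g = c^{-2}g$, one has $\tilde g^{jk} = c^2 g^{jk}$ while $d\tilde m = c^{-2}\,dm$ in dimension two, so that $(du,d\phi)_{\tilde g}\,d\tilde m = (du,d\phi)_g\,dm$ and the conformal factor $c$ drops out entirely. Consequently the surface term is exactly $\int\!\int_\Gamma (\p_\nu u|^+ - \p_\nu u|^-)\,\phi\, dS_g\, dt$ with the plain $g$-conormal derivative on both sides, and its vanishing is the second transmission condition.

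The main obstacle is purely one of rigor in low regularity: the conormal traces $\p_\nu u|^\pm$ must be given precise meaning, since $u$ is only $H^1$ in space and $\p_t^2 u$ is a priori only a distribution. I would handle this by \emph{defining} $\p_\nu u|^\mp$ as the distribution on $(-T,T)\times\Gamma$ arising from Green's identity on $U_\mp$ — that is, by testing $u$ against extensions of boundary test functions — and checking that this definition is independent of the chosen extension because $u$ solves the wave equation weakly in the open sets $U_\pm$, so that no bulk contribution survives. With the traces so interpreted, the computation above is legitimate and the two transmission conditions, hence the lemma, follow. In general dimension $n$ the cancellation leaves a harmless factor $c^{\,2-n}$ in front of the inner conormal flux, which does not affect the vanishing statements since $c$ is smooth and positive; in the surface case $n=2$ it disappears and the jump is literally $\p_\nu u|^+ - \p_\nu u|^-$.
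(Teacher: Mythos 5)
Your argument is correct, but it takes a genuinely different route from the paper. The paper proves the lemma by two applications of Tataru's unique continuation theorem: assuming the traces from the $\Sigma$ side vanish, it first concludes that $u$ vanishes on a one-sided collar $V_\delta$ of $\Gamma$ inside $\Sigma$ (at the cost of shrinking the time interval by $\delta$), and then, since $u$ consequently solves the smooth wave equation $\p_t^2 u - \Delta_g u = 0$ on $V_\delta \cup (U\setminus\Sigma)$, a second application propagates the vanishing to a full two-sided neighborhood of $\Gamma$, from which the outer traces vanish. You instead extract the two transmission conditions $u|_\Gamma^+ = u|_\Gamma^-$ and $\p_\nu u|_\Gamma^+ = \p_\nu u|_\Gamma^-$ directly from the global $H^1$ regularity and the weak space--time formulation, using the conformal invariance of the Dirichlet energy in dimension two (namely $|\tilde g|^{1/2}\tilde g^{jk} = |g|^{1/2} g^{jk}$ when $\tilde g = c^{-2}g$), so that the $g$-conormal flux is continuous across $\Gamma$ and the stated equivalence is immediate. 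Your approach is more elementary (no unique continuation), works in any dimension (the residual factor $c^{2-n}$ is smooth and positive, hence harmless), incurs no loss in the time interval, and proves more than is asked: the full Cauchy data match across $\Gamma$, not merely the equivalence of their vanishing. What the paper's argument buys in exchange is the stronger local conclusion that $u$ vanishes identically on a space--time neighborhood of $\Gamma$. One caveat you should state explicitly: your derivation of the Neumann transmission condition requires that $\p_t^2 u - \Delta_{\tilde g} u = 0$ hold in the variational sense across $\Gamma$, with $\Delta_{\tilde g}$ defined through the form $Q_{\tilde g}$ as in Section \ref{sec_direct_problem}, and not merely piecewise on $U\cap\Sigma^{int}$ and $U\setminus\Sigma$; under the purely piecewise reading the conormal jump need not vanish and the lemma itself would be false. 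This is the intended reading, and the paper's own proof relies on it implicitly as well, since its second unique-continuation step needs the equation to hold distributionally across $\Gamma$ in order to continue the vanishing from $V_\delta$ into $U\setminus\Sigma$.
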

\begin{proof}
Let us assume that the traces $u|_{(-T, T) \times \Gamma}^+$
and $\p_\nu u|_{(-T, T) \times \Gamma}^+$
exist and vanish on $(-T, T) \times \Gamma$.
We denote by $\tilde d_{\Sigma}$ the distance function of the Riemannian manifold $(\Sigma, \tilde g)$.
Let $\delta \in (0, T)$.
By Tataru's unique continuation, see \cite[Thm. 3.16]{Katchalov2001}
for a formulation suitable for our needs,
we have that $u(t)$ vanish on 
\begin{align*}
V_\delta := \{x \in U \cap \Sigma;\ 
\tilde d_{\Sigma}(x, \Gamma) < \delta \}
\end{align*}
for all $t \in (-T+\delta, T - \delta)$.
Thus $u$ satisfies the wave equation 
\begin{align}
\label{uniq_cont_we_switch_metric}
\p_t^2 u -\Delta_g u = 0 
\quad 
\text{on $(-T + \delta, T - \delta) \times V_\delta \cup (U \setminus \Sigma)$},
\end{align}
%
Again by unique continuation 
there is a neighborhood $U_\delta$ of
$\Gamma$ in $M$ such that $u$ vanish in 
$(-T + 2\delta, T - 2\delta) \times U_\delta$
for small $\delta > 0$.
Hence the traces 
$u|_{(-T, T) \times \Gamma}^-$
and $\p_\nu u|_{(-T, T) \times \Gamma}^-$
vanish on $(-T, T) \times \Gamma$,
and we have shown the necessity part in the claim of the lemma.
The sufficiency can be shown analogously 
after choosing a smooth Riemannian metric on $M$
that is an extension of $\tilde g$.
\end{proof}


\begin{lemma}
\label{lem_uniq_cont}
Let $T > 0$ and let 
\begin{align*}
u \in C([-T,T]; H^1(M)) \cap C^1([-T,T]; L^2(M))
\end{align*}
be a solution of 
$\p_t^2 u - \Delta_{\tilde g} u = 0$
satisfying for an open $\Gamma \subset \p M$
\begin{align*}
u|_{(-T, T) \times \Gamma} = 
\p_\nu u|_{(-T, T) \times \Gamma} = 0.
\end{align*}
Then $u(t,x)$ vanish when 
\begin{align*}
(t,x) \in (-T, T) \times M
\quad 
\text{and} 
\quad
\tilde d(x, \Gamma) + |t| < T.
\end{align*}
\end{lemma}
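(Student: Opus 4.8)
The plan is to run a continuity argument in the radius of the double cone of influence, using Tataru's unique continuation (in the form \cite[Thm.~3.16]{Katchalov2001}) wherever $\tilde g$ is smooth and invoking Lemma~\ref{lem_uniq_cont_through_discontinuity} to carry the vanishing across the interface $\p\Sigma$. For $s\in[0,T]$ set the double cone
\[
\mathcal C_s := \{(t,x)\in(-T,T)\times M;\ |t|+\tilde d(x,\Gamma)<s\},
\]
and let $S:=\sup\{s\in[0,T];\ u|_{\mathcal C_s}=0\}$. Since $\mathcal C_S=\bigcup_{s<S}\mathcal C_s$ is an increasing union of open sets, $u$ vanishes on $\mathcal C_S$, and the claim reduces to showing $S=T$. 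For the base of the argument I would use that $\Sigma\subset M^{int}$, so there is a collar of $\p M$ on which $\tilde g=g$ is smooth; applying \cite[Thm.~3.16]{Katchalov2001} to the vanishing Cauchy data $u|_{(-T,T)\times\Gamma}=\p_\nu u|_{(-T,T)\times\Gamma}=0$ gives $u=0$ on $\mathcal C_{s_0}$ for some small $s_0>0$, so $S\ge s_0>0$.

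Next I would assume $S<T$ and enlarge the cone to reach a contradiction. The slice $\{(t,x);\ |t|+\tilde d(x,\Gamma)=S\}$ is compact because $M$ is compact, so it suffices to produce a local extension of the zero set near each of its points and then use compactness to select one $\epsilon>0$ that works simultaneously. Fix such a point $(t_0,x_0)$. If $x_0\notin\p\Sigma$, then $\tilde g$ is smooth on a neighborhood of $x_0$, and I would push the zero set a definite amount past $(t_0,x_0)$ by foliating the region between $\mathcal C_S$ and $\mathcal C_{S+\epsilon}$ with strictly spacelike graphs $t=\psi(x)$, $|\nabla\psi|_{\tilde g}<1$, and continuing across each one by Tataru's theorem; the characteristic slice $\{|t|+\tilde d(\cdot,\Gamma)=S\}$ is recovered in the limit. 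If instead $x_0\in\p\Sigma$, then on a relatively open piece of $\p\Sigma$ near $x_0$ the one-sided Cauchy data of $u$ from the side already contained in $\mathcal C_S$ vanish on a time interval about $t_0$; Lemma~\ref{lem_uniq_cont_through_discontinuity} then forces the one-sided data from the far side to vanish as well, and Tataru's theorem propagates $u=0$ into the far region. Combining the two cases yields $u=0$ on $\mathcal C_{S+\epsilon}$, contradicting the definition of $S$; hence $S=T$.

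The delicate point, and the step I expect to be the main obstacle, is the interface case. Because $\tilde g$ jumps across $\p\Sigma$, the function $\tilde d(\cdot,\Gamma)$ is only the \emph{refracted} distance: its minimizing geodesics bend at $\p\Sigma$ according to Snell's law, and I must verify that the radius at which the cleared cone first meets a given boundary point of $\Sigma$ coincides with $\tilde d$, and that Lemma~\ref{lem_uniq_cont_through_discontinuity} applies there on a genuine relatively open piece of $\p\Sigma$ and over a time interval long enough to survive the $\delta$-losses built into that lemma. Since the available margin $T-S$ is strictly positive and a minimizing geodesic to a point with $\tilde d(x,\Gamma)<T$ meets $\p\Sigma$ in only finitely many transversal crossings, the shrinking of the time interval through repeated crossings can be absorbed; the real care is needed for geodesics that graze $\p\Sigma$ tangentially or glide along it, where the transmission step and the transversality of the spacelike foliation to $\p\Sigma$ must be argued separately. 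Everything away from $\p\Sigma$ is the standard smooth domain-of-influence result.
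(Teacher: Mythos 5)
Your global strategy is genuinely different from the paper's: you run a continuity argument on the radius $s$ of the double cone $\mathcal C_s=\{|t|+\tilde d(x,\Gamma)<s\}$ and try to push past the critical radius $S$ locally at every point of the slice $\{|t|+\tilde d(x,\Gamma)=S\}$, whereas the paper fixes the target point $(t_0,x_0)$, takes a single $\tilde g$-minimizing path $\gamma$ from $\Gamma$ to $x_0$ with $\tilde l(\gamma)<T-|t_0|$, and propagates the vanishing along $\gamma$ by finitely many alternating applications of Tataru's theorem (in $(M\setminus\Sigma,\tilde g)$ or $(\Sigma,\tilde g)$) and the transmission Lemma \ref{lem_uniq_cont_through_discontinuity}. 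The trade is clear: your route avoids following any particular geodesic but must control the contact of the critical cone with $\p\Sigma$ everywhere on the slice; the paper's route only ever deals with the interface at the finitely many points where one chosen minimizing path meets it.

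The proposal as written has a genuine gap, and it sits exactly where you flag it. First, the assertion that ``a minimizing geodesic to a point with $\tilde d(x,\Gamma)<T$ meets $\p\Sigma$ in only finitely many transversal crossings'' is not free: it is the substantive geometric lemma of the paper's proof. The paper proves it by showing that if $\gamma^{-1}(M\setminus\Sigma)$ had infinitely many components $(a_j,b_j)$, the corresponding excursions would eventually be short and hug $\p\Sigma$, and (since $c>1$ in $\Sigma$, so $\tilde g=c^{-2}g<g$ there) replacing each by the projected path $(0,\gamma^2(t))$ running just inside $\Sigma$ would strictly shorten $\gamma$ --- contradicting minimality. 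No such argument appears in your proposal, and without it neither the finiteness claim nor the exclusion of pathological (Cantor-like) contact of $\gamma$ with $\p\Sigma$ is justified. Second, in your continuity framework the hard configurations are unavoidable: at a point $(t_0,x_0)$ of the critical slice with $x_0\in\p\Sigma$ where $\tilde d(\cdot,\Gamma)|_{\p\Sigma}$ has a local minimum equal to $S-|t_0|$ (tangential first contact), the set of interface points already interior to $\mathcal C_S$ may be empty near $x_0$, so there is no relatively open piece of $\p\Sigma$ on which the one-sided Cauchy data are known to vanish, and Lemma \ref{lem_uniq_cont_through_discontinuity} cannot be invoked without first re-clearing a one-sided neighbourhood by a separate boundary unique continuation step; you acknowledge this but do not supply the argument. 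To close the proof along your lines you would need either to carry out that tangential analysis, or to import the paper's finiteness lemma and thereby effectively revert to the path-following scheme.
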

\begin{proof}
Let $(t_0,x_0) \in (-T, T) \times M$
satisfy $\tilde d(x_0, \Gamma) < T - |t_0|$.
Then there is $y \in \Gamma$
and a shortest path $\gamma : [0, \ell] \to M$
from $y$ to $x_0$ such that 
$\tilde l(\gamma) < T - |t_0|$.
The open set
$\gamma^{-1}(M \setminus \Sigma) \setminus \{0, \ell\}$
is a union of open disjoint intervals 
$(a_j, b_j)$, $j \in J$, where either $J=\N$ or $J = \{1, \dots, N\}$ 
for some $N \in \N$.

Let us show that $J = \N$ leads to a contradiction.
As the intervals are disjoint, we have
\begin{align*}
\sum_{j=1}^\infty \tilde l(\gamma|_{[a_j, b_j]})
\le \tilde l(\gamma).
\end{align*}
Thus 
$\tilde l(\gamma|_{[a_j, b_j]}) \to 0$ as $j \to \infty$.
Moreover, $\gamma(a_j), \gamma(b_j) \in \p \Sigma$,
and $\gamma|_{(a_j, b_j)}$ is a geodesic in 
$M \setminus \Sigma$.
Let us consider the semi-geodesic coordinates 
(\ref{semi_geodesic_coordinates}) in a neighborhood $U$ of 
$\gamma(a_j)$.
For large enough $j$ we have that 
$\gamma([a_j, b_j]) \subset U$.
Let us define $\gamma_j(t) := (0, \gamma^2(t))$,
$t \in [a_j, b_j]$.
Note that $\gamma_j$ is a smooth path from $\gamma(a_j)$
to $\gamma(b_j)$.

We will show that 
$\tilde l(\gamma|_{(a_j, b_j)}) 
> \tilde l(\gamma_j)$ for large $j$, which is a contradiction
since $\gamma$ is a shortest path. By (\ref{semi_geodesic_coordinates}) we have
\begin{align*}
\tilde l(\gamma|_{(a_j, b_j)}) 
&= 
\int_{a_j}^{b_j}
\sqrt{|\p_t \gamma^1(t)|^2 + h(\gamma(t)) |\p_t \gamma^2(t)|^2} dt
\\&\ge 
\int_{a_j}^{b_j}
\sqrt{h(\gamma(t))} |\p_t \gamma^2(t)| dt
\end{align*}
Moreover, for $t \in [a_j, b_j]$
\begin{align*}
\gamma^1(t) 
= \tilde d(\gamma(t), \p \Sigma) 
\le \tilde d(\gamma(t), \gamma(a_j))
\le \tilde l(\gamma|_{[a_j, t]})
\le \tilde l(\gamma|_{[a_j, b_j]}).
\end{align*}
Thus $\norm{\gamma^1}_{L^\infty(a_j, b_j)} \to 0$
as $j \to \infty$, whence for large $j$
\begin{align*}
\sqrt{h(0, \gamma^2(t))} <
\min_{z \in \p \Sigma} c(z) \sqrt{h(\gamma(t))},
\quad t \in [a_j, b_j].
\end{align*}
Moreover, $\int_{a_j}^{b_j} |\p_t \gamma^2(t)| dt > 0$
since otherwise $\gamma^2(t)$ is constant for $t \in [a_j, b_j]$ and $\gamma|_{[a_j, b_j]}$ is a loop.
Hence for large $j$ we have the contradiction
\begin{align*}
\tilde l(\gamma|_{(a_j, b_j)}) 
> 
\int_{a_j}^{b_j}
\frac{\sqrt{h(0, \gamma^2(t))}}{c(0, \gamma^2(t))} |\p_t \gamma^2(t)| dt
=
\tilde l(\gamma_j).
\end{align*}
We have shown that $J$ is finite.

We may renumber the intervals 
$(a_j, b_j)$, $j = 1, \dots, N$, so that
\begin{align*}
a_1 < b_1 \le a_2 < b_2 \le \dots \le a_N < b_N.
\end{align*}
Then we may repeatedly use unique continuation 
together with Lemma \ref{lem_uniq_cont_through_discontinuity}
either in $(M \setminus \Sigma, \tilde g)$
or in $(\Sigma, \tilde g)$ to see that $u(t)$ vanish near 
$\gamma(s)$ for $|t| < T - \tilde l(\gamma|_{[0, s]})$.
In particular, $u(t_0)$ vanish near $\gamma(\ell) = x_0$ 
since $|t_0| < T - \tilde l(\gamma)$.
\end{proof}

Given Lemma \ref{lem_uniq_cont}, the proof of Lemma \ref{lem_density}
is almost identical with the proof of \cite[Thm. 3.10]{Katchalov2001}.
Note that the boundary of $\tilde M(T 1_\Gamma)$
is of measure zero by the proof of \cite[Lem. 2]{Oksanen2011}.

\vspace{1cm}
{\em Acknowledgements.}
The author would like to thank Y. Kurylev for useful discussions. 
The research was partly supported by Finnish Centre of Excellence in Inverse Problems Research,
Academy of Finland COE 213476 and partly by Finnish Graduate School in Computational Sciences.

\bibliographystyle{abbrv}
\bibliography{main}

\begin{thebibliography}{10}

\bibitem{Anderson2004}
M.~Anderson, A.~Katsuda, Y.~Kurylev, M.~Lassas, and M.~Taylor.
\newblock Boundary regularity for the {R}icci equation, geometric convergence,
  and {G}el'fand's inverse boundary problem.
\newblock {\em Invent. Math.}, 158(2):261--321, 2004.

\bibitem{Astala2006}
K.~Astala and L.~P{\"a}iv{\"a}rinta.
\newblock Calder\'on's inverse conductivity problem in the plane.
\newblock {\em Ann. of Math. (2)}, 163(1):265--299, 2006.

\bibitem{Belishev1987}
M.~I. Belishev.
\newblock An approach to multidimensional inverse problems for the wave
  equation.
\newblock {\em Dokl. Akad. Nauk SSSR}, 297(3):524--527, 1987.

\bibitem{Belishev1999}
M.~I. Belishev and V.~Y. Gotlib.
\newblock Dynamical variant of the {BC}-method: theory and numerical testing.
\newblock {\em J. Inverse Ill-Posed Probl.}, 7(3):221--240, 1999.

\bibitem{Belishev1992}
M.~I. Belishev and Y.~V. Kurylev.
\newblock To the reconstruction of a {R}iemannian manifold via its spectral
  data ({BC}-method).
\newblock {\em Comm. Partial Differential Equations}, 17(5-6):767--804, 1992.

\bibitem{Bellassoued2010}
M.~Bellassoued and D.~D.~S. Ferreira.
\newblock Stability estimates for the anisotropic wave equation from the
  dirichlet-to-neumann map.
\newblock May 2010.

\bibitem{Bingham2008}
K.~Bingham, Y.~Kurylev, M.~Lassas, and S.~Siltanen.
\newblock Iterative time-reversal control for inverse problems.
\newblock {\em Inverse Probl. Imaging}, 2(1):63--81, 2008.

\bibitem{Blagovevsvcenskiui1966}
A.~S. Blagove{\v{s}}{\v{c}}enski{\u\i}.
\newblock The inverse problem of the theory of seismic wave propagation.
\newblock In {\em Problems of mathematical physics, {N}o. 1: {S}pectral theory
  and wave processes ({R}ussian)}, pages 68--81. (errata insert). Izdat.
  Leningrad. Univ., Leningrad, 1966.

\bibitem{Brown2003}
R.~M. Brown and R.~H. Torres.
\newblock Uniqueness in the inverse conductivity problem for conductivities
  with {$3/2$} derivatives in {$L^p,\ p>2n$}.
\newblock {\em J. Fourier Anal. Appl.}, 9(6):563--574, 2003.

\bibitem{Brown1997}
R.~M. Brown and G.~A. Uhlmann.
\newblock Uniqueness in the inverse conductivity problem for nonsmooth
  conductivities in two dimensions.
\newblock {\em Comm. Partial Differential Equations}, 22(5-6):1009--1027, 1997.

\bibitem{Bruhl2001}
M.~Br{\"u}hl.
\newblock Explicit characterization of inclusions in electrical impedance
  tomography.
\newblock {\em SIAM J. Math. Anal.}, 32(6):1327--1341 (electronic), 2001.

\bibitem{Burkard2009}
C.~Burkard and R.~Potthast.
\newblock A time-domain probe method for three-dimensional rough surface
  reconstructions.
\newblock {\em Inverse Probl. Imaging}, 3(2):259--274, 2009.

\bibitem{Chen2010}
Q.~Chen, H.~Haddar, A.~Lechleiter, and P.~Monk.
\newblock A sampling method for inverse scattering in the time domain.
\newblock {\em Inverse Problems}, 26(8):085001, 17, 2010.

\bibitem{Colton1996}
D.~Colton and A.~Kirsch.
\newblock A simple method for solving inverse scattering problems in the
  resonance region.
\newblock {\em Inverse Problems}, 12(4):383--393, 1996.

\bibitem{Dahl2009}
M.~F. Dahl, A.~Kirpichnikova, and M.~Lassas.
\newblock Focusing waves in unknown media by modified time reversal iteration.
\newblock {\em SIAM J. Control Optim.}, 48(2):839--858, 2009.

\bibitem{Gebauer2008}
B.~Gebauer.
\newblock Localized potentials in electrical impedance tomography.
\newblock {\em Inverse Probl. Imaging}, 2(2):251--269, 2008.

\bibitem{Greenleaf2003}
A.~Greenleaf, M.~Lassas, and G.~Uhlmann.
\newblock The {C}alder\'on problem for conormal potentials. {I}. {G}lobal
  uniqueness and reconstruction.
\newblock {\em Comm. Pure Appl. Math.}, 56(3):328--352, 2003.

\bibitem{Hahner1999}
P.~H{\"a}hner.
\newblock An inverse problem in electrostatics.
\newblock {\em Inverse Problems}, 15(4):961--975, 1999.

\bibitem{Hanke2008}
M.~Hanke, N.~Hyv{\"o}nen, and S.~Reusswig.
\newblock Convex source support and its applications to electric impedance
  tomography.
\newblock {\em SIAM J. Imaging Sci.}, 1(4):364--378, 2008.

\bibitem{Hirsch1994}
M.~W. Hirsch.
\newblock {\em Differential topology}, volume~33 of {\em Graduate Texts in
  Mathematics}.
\newblock Springer-Verlag, New York, 1994.
\newblock Corrected reprint of the 1976 original.

\bibitem{Ide2007}
T.~Ide, H.~Isozaki, S.~Nakata, S.~Siltanen, and G.~Uhlmann.
\newblock Probing for electrical inclusions with complex spherical waves.
\newblock {\em Comm. Pure Appl. Math.}, 60(10):1415--1442, 2007.

\bibitem{Ikehata1998}
M.~Ikehata.
\newblock Reconstruction of the shape of the inclusion by boundary
  measurements.
\newblock {\em Comm. Partial Differential Equations}, 23(7-8):1459--1474, 1998.

\bibitem{Ikehata2000}
M.~Ikehata.
\newblock Reconstruction of the support function for inclusion from boundary
  measurements.
\newblock {\em J. Inverse Ill-Posed Probl.}, 8(4):367--378, 2000.

\bibitem{Ikehata2004a}
M.~Ikehata.
\newblock Mittag-{L}effler's function and extracting from {C}auchy data.
\newblock In {\em Inverse problems and spectral theory}, volume 348 of {\em
  Contemp. Math.}, pages 41--52. Amer. Math. Soc., Providence, RI, 2004.

\bibitem{Isakov2009}
V.~Isakov.
\newblock Inverse obstacle problems.
\newblock {\em Inverse Problems}, 25(12):123002, 18, 2009.

\bibitem{Kabanikhin2005}
S.~I. Kabanikhin, A.~D. Satybaev, and M.~A. Shishlenin.
\newblock {\em Direct methods of solving multidimensional inverse hyperbolic
  problems}.
\newblock Inverse and Ill-posed Problems Series. VSP, Utrecht, 2005.

\bibitem{Katchalov2001}
A.~Katchalov, Y.~Kurylev, and M.~Lassas.
\newblock {\em Inverse boundary spectral problems}, volume 123 of {\em Chapman
  \& Hall/CRC Monographs and Surveys in Pure and Applied Mathematics}.
\newblock Chapman \& Hall/CRC, Boca Raton, FL, 2001.

\bibitem{Kirpichnikova2007}
A.~Kirpichnikova and Y.~Kurylev.
\newblock Inverse boundary spectral problem for riemannian polyhedra.
\newblock 2007.

\bibitem{Kirsch1998}
A.~Kirsch.
\newblock Characterization of the shape of a scattering obstacle using the
  spectral data of the far field operator.
\newblock {\em Inverse Problems}, 14(6):1489--1512, 1998.

\bibitem{Kohn1985}
R.~V. Kohn and M.~Vogelius.
\newblock Determining conductivity by boundary measurements. {II}. {I}nterior
  results.
\newblock {\em Comm. Pure Appl. Math.}, 38(5):643--667, 1985.

\bibitem{Kusiak2003}
S.~Kusiak and J.~Sylvester.
\newblock The scattering support.
\newblock {\em Comm. Pure Appl. Math.}, 56(11):1525--1548, 2003.

\bibitem{Lasiecka1991}
I.~Lasiecka and R.~Triggiani.
\newblock Regularity theory of hyperbolic equations with nonhomogeneous
  {N}eumann boundary conditions. {II}. {G}eneral boundary data.
\newblock {\em J. Differential Equations}, 94(1):112--164, 1991.

\bibitem{Lines2005}
C.~D. Lines and S.~N. Chandler-Wilde.
\newblock A time domain point source method for inverse scattering by rough
  surfaces.
\newblock {\em Computing}, 75(2-3):157--180, 2005.

\bibitem{Lions1972}
J.-L. Lions and E.~Magenes.
\newblock {\em Non-homogeneous boundary value problems and applications. {V}ol.
  {I}}.
\newblock Springer-Verlag, New York, 1972.
\newblock Translated from the French by P. Kenneth, Die Grundlehren der
  mathematischen Wissenschaften, Band 181.

\bibitem{Luke2003}
D.~R. Luke and R.~Potthast.
\newblock The no response test---a sampling method for inverse scattering
  problems.
\newblock {\em SIAM J. Appl. Math.}, 63(4):1292--1312 (electronic), 2003.

\bibitem{Luke2006}
D.~R. Luke and R.~Potthast.
\newblock The point source method for inverse scattering in the time domain.
\newblock {\em Math. Methods Appl. Sci.}, 29(13):1501--1521, 2006.

\bibitem{Oksanen2011}
L.~Oksanen.
\newblock Solving an inverse problem for the wave equation by using a
  minimization algorithm and time-reversed measurements.
\newblock {\em Inverse Probl. Imaging (to appear)}, Jan. 2011.

\bibitem{Paivarinta2003}
L.~P{\"a}iv{\"a}rinta, A.~Panchenko, and G.~Uhlmann.
\newblock Complex geometrical optics solutions for {L}ipschitz conductivities.
\newblock {\em Rev. Mat. Iberoamericana}, 19(1):57--72, 2003.

\bibitem{Pestov2010}
L.~Pestov, V.~Bolgova, and O.~Kazarina.
\newblock Numerical recovering of a density by the {BC}-method.
\newblock {\em Inverse Probl. Imaging}, 4(4):703--712, 2010.

\bibitem{Petkov2003}
V.~Petkov and L.~Stoyanov.
\newblock Sojourn times, singularities of the scattering kernel and inverse
  problems.
\newblock In {\em Inside out: inverse problems and applications}, volume~47 of
  {\em Math. Sci. Res. Inst. Publ.}, pages 297--332. Cambridge Univ. Press,
  Cambridge, 2003.

\bibitem{Potthast2001}
R.~Potthast.
\newblock {\em Point sources and multipoles in inverse scattering theory},
  volume 427 of {\em Chapman \& Hall/CRC Research Notes in Mathematics}.
\newblock Chapman \& Hall/CRC, Boca Raton, FL, 2001.

\bibitem{Potthast2006}
R.~Potthast.
\newblock A survey on sampling and probe methods for inverse problems.
\newblock {\em Inverse Problems}, 22(2):R1--R47, 2006.

\bibitem{Potthast2003}
R.~Potthast, J.~Sylvester, and S.~Kusiak.
\newblock A `range test' for determining scatterers with unknown physical
  properties.
\newblock {\em Inverse Problems}, 19(3):533--547, 2003.

\bibitem{Stefanov2005}
P.~Stefanov and G.~Uhlmann.
\newblock Stable determination of generic simple metrics from the hyperbolic
  {D}irichlet-to-{N}eumann map.
\newblock {\em Int. Math. Res. Not.}, (17):1047--1061, 2005.

\bibitem{Sylvester1987}
J.~Sylvester and G.~Uhlmann.
\newblock A global uniqueness theorem for an inverse boundary value problem.
\newblock {\em Ann. of Math. (2)}, 125(1):153--169, 1987.

\bibitem{Tataru}
D.~Tataru.
\newblock Carleman estimates, unique continuation and applications.
\newblock http://www.math.berkeley.edu/\~~tataru/papers/ucpnotes.ps.

\bibitem{Tataru1995}
D.~Tataru.
\newblock Unique continuation for solutions to {PDE}'s; between {H}\"ormander's
  theorem and {H}olmgren's theorem.
\newblock {\em Comm. Partial Differential Equations}, 20(5-6):855--884, 1995.

\bibitem{Uhlmann2009}
G.~Uhlmann.
\newblock Electrical impedance tomography and calderon's problem.
\newblock {\em Inverse Problems}, 25(12):123011, 2009.

\bibitem{Uhlmann2008}
G.~Uhlmann and J.-N. Wang.
\newblock Reconstructing discontinuities using complex geometrical optics
  solutions.
\newblock {\em SIAM J. Appl. Math.}, 68(4):1026--1044, 2008.

\end{thebibliography}
\end{document}